\definecolor{dgreen}{rgb}{0.0, 0.5, 0.0}
\newtheorem{thm}{Theorem}[section]
\newtheorem{lem}[thm]{Lemma}
\newtheorem{cor}[thm]{Corollary}
\newtheorem{conj}{Conjecture}
\theoremstyle{definition}
\newtheorem{example}[thm]{Example}
\newtheorem{rem}[thm]{Remark}
\numberwithin{equation}{section} 
\numberwithin{figure}{section}
\numberwithin{table}{section}
\newcommand{\E}{\mathbf{E}}
\renewcommand{\P}{\mathbf{P}}
\begin{document}

\title[The norm of Gaussian random matrices]{On the
spectral norm of \\ Gaussian random matrices}
\author{Ramon van Handel}
\address{Sherrerd Hall Room 227, Princeton University, Princeton, NJ 
08544, USA}
\thanks{Supported in part by NSF grant
CAREER-DMS-1148711 and by the ARO through PECASE award 
W911NF-14-1-0094.}
\email{rvan@princeton.edu}

\begin{abstract} 
Let $X$ be a $d\times d$ symmetric random matrix with independent but 
non-identically distributed Gaussian entries.  It has been conjectured by 
Lata{\l}a that the spectral norm of $X$ is always of the same order as the 
largest Euclidean norm of its rows.  A positive resolution of this 
conjecture would provide a sharp understanding of the probabilistic 
mechanisms that control the spectral norm of inhomogeneous Gaussian 
random matrices.  This paper establishes the conjecture up to a 
dimensional factor of order $\sqrt{\log\log d}$.  Moreover, dimension-free 
bounds are developed that are optimal to leading order and that establish 
the conjecture in special cases. The proofs of these results shed 
significant light on the geometry of the underlying Gaussian processes. 
\end{abstract}

\subjclass[2000]{Primary 60B20; Secondary 46B09, 60F10}

\keywords{random matrices, spectral norm, nonasymptotic bounds}

\dedicatory{In memory of Evarist Gin\'e}

\maketitle

\thispagestyle{empty}


\section{Introduction}
\label{sec:intro}

Let $X$ be a symmetric random matrix with independent mean zero entries.  
If the variances of the entries are all of the same order, this model is 
known as a Wigner matrix and has been widely studied in the literature 
(e.g., \cite{AGZ10}).  Due to the large amount of symmetry of such models, 
extremely precise analytic results are available on the limiting behavior 
of fine-scale spectral properties of the matrix.  Our interest, however, 
goes in an orthogonal direction. We consider the case where the variances 
of the entries are given but arbitrary: that is, we consider 
\emph{structured random matrices} where the structure is given by the 
variance pattern of the entries.  The challenge in investigating such 
matrices is to understand how the given structure of the matrix is 
reflected in its spectral properties.

In particular, we are interested in the location of the edge of the 
spectrum, that is, in the expected spectral norm $\mathbf{E}\|X\|$ of the 
matrix.  When the entries of the matrix are i.i.d., a complete 
understanding up to universal constants is provided by a remarkable result 
of Seginer \cite{Seg00} which states that the expected spectral norm of 
the matrix is of the same order as the largest Euclidean norm of its rows 
and columns.  Unfortunately, this result hinges crucially on the 
invariance of the distribution of the matrix under permutations of the 
entries, and is therefore useless in the presence of nontrivial structure.  
It is noted in \cite{Seg00} that the conclusion fails already in the 
simplest examples of structured random matrices with bounded entries.  
Surprisingly, however, no counterexamples to this statement are known for 
structured random matrices with independent \emph{Gaussian} entries.  
This observation has led to the following conjecture proposed by R.\ 
Lata{\l}a (see also \cite{Lat05,RS13}).

Throughout the remainder of this paper, $X$ will denote the $d\times d$ 
symmetric random matrix with entries $X_{ij}=b_{ij}g_{ij}$, where 
$\{g_{ij}:i\ge j\}$ are independent standard Gaussian random variables and 
$\{b_{ij}:i\ge j\}$ are given nonnegative scalars.  We write $a\lesssim b$ 
if $a\le Cb$ for a universal constant $C$, and $a\asymp b$ if $a\lesssim 
b$ and $b\lesssim a$.

\begin{conj}
\label{conj:1}
The expected spectral norm satisfies
$$
	\E\|X\| \asymp
	\E\Bigg[\max_i\sqrt{\sum_j X_{ij}^2}\Bigg].
$$
\end{conj}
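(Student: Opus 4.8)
The lower bound is immediate and uses no probabilistic input: since $\|X\| = \sup_{\|v\|=1}\|Xv\| \ge \max_i \|Xe_i\|$ and $\|Xe_i\|^2 = \sum_j X_{ij}^2$ by symmetry of $X$, taking expectations gives $\E\|X\| \ge \E\max_i\sqrt{\sum_j X_{ij}^2}$ with constant one. The entire difficulty is therefore the matching upper bound $\E\|X\| \lesssim \E\max_i\sqrt{\sum_j X_{ij}^2}$, and my plan is to recast both sides as suprema of one and the same Gaussian process and then compare them geometrically.

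Since $X$ is symmetric, $\|X\| = \sup_{(u,v)\in B\times B}\langle Xu,v\rangle$, where $B$ is the Euclidean unit ball; moreover $\max_i\sqrt{\sum_j X_{ij}^2} = \sup_{(i,v)\in[d]\times B}\langle Xe_i,v\rangle$. Both are suprema of the centered Gaussian process $Z_{u,v} := \langle Xu,v\rangle = \sum_{i,j} X_{ij}u_iv_j$, taken over $T := B\times B$ and over the subset $T_0 := \{e_1,\dots,e_d\}\times B \subseteq T$ respectively. By the majorizing measure theorem, $\E\|X\| \asymp \gamma_2(T,\rho)$ and $\E\max_i\sqrt{\sum_j X_{ij}^2} \asymp \gamma_2(T_0,\rho)$, where $\rho$ is the canonical metric $\rho((u,v),(u',v'))^2 = \E|Z_{u,v}-Z_{u',v'}|^2$. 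The conjecture is thus equivalent to the purely deterministic comparison $\gamma_2(T,\rho) \lesssim \gamma_2(T_0,\rho)$, the reverse inequality being automatic from $T_0\subseteq T$: relaxing the left factor from the coordinate vectors $\{e_i\}$ to the full ball $B$ must not inflate the chaining functional by more than a universal constant.

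To prove this comparison I would construct an admissible sequence of partitions of $T$ out of a good admissible sequence for $T_0$. The $v$-factor is common to both index sets, so all the work lies in chaining over $u\in B$. Two structural inputs should drive the argument: first, the symmetry of $X$ makes $Z$ invariant under $(u,v)\mapsto(v,u)$, which lets one transfer chaining complexity between the two factors; second, the increments of $u\mapsto Z_{u,v}$ at any fixed scale are governed by the same scalars $b_{ij}$ that control the row norms. Concretely, I would chain $u$ through a sequence of nets of $B$ adapted to $\rho$, approximating each $u$ at scale $n$ by a vector supported on a controlled number of coordinates. Since such a localized $u$ makes $Xu$ a combination of only a few columns $Xe_i$, the resulting increments ought to be absorbed into the row-norm functional $\gamma_2(T_0,\rho)$.

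The main obstacle — and the reason the conjecture remains open — is the control of \emph{delocalized} directions $u\in B$ whose mass is spread over many coordinates. For such $u$ the sparse approximation above is lossy, and the increments must be summed over roughly $\log d$ scales; a naive chaining bound then costs a factor $\sqrt{\log d}$, which is far too large. The crux is to show that these delocalized contributions are already subsumed by $\gamma_2(T_0,\rho)$, i.e. that spreading $u$ out creates no genuinely new complexity. I expect that the best the present circle of ideas can deliver is a bound in which the accumulated loss over scales is compressed to a factor of order $\sqrt{\log\log d}$ rather than $O(1)$, establishing the conjecture up to this dimensional factor; removing it altogether appears to require a genuinely new understanding of the geometry of $(T,\rho)$.
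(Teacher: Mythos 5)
You were asked to prove Conjecture~\ref{conj:1}, which is the open problem motivating the entire paper: the paper contains no proof of it, only the trivial lower bound, the equivalence with Conjecture~\ref{conj:2} (Theorem~\ref{thm:main1}), the upper bound up to a factor $\sqrt{\log\log d}$ (Theorem~\ref{thm:main2}), and special cases such as Corollary~\ref{cor:posdef}. Your proposal is honest about this, and its correct parts match the paper: the lower bound via $\|X\|\ge\max_i\|Xe_i\|$ holds with constant one exactly as the paper observes, and your reduction of the upper bound, via the majorizing measure theorem, to the deterministic comparison $\gamma_2(B_2\times B_2,\rho)\lesssim\gamma_2(\{e_1,\dots,e_d\}\times B_2,\rho)$ is valid and consistent with the paper's own framing of the problem as understanding the geometry of $(B_2,d)$.

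The genuine gap lies in your last two steps. First, the claim that the chaining losses accumulated over $\sim\log d$ scales can be ``compressed'' to a factor $\sqrt{\log\log d}$ is pure assertion: your sparse-approximation scheme contains no mechanism that produces this factor. The paper's $\sqrt{\log\log d}$ result (Theorem~\ref{thm:main2ext}) is obtained by an entirely different, non-chaining argument: order the rows so that $b_{ij}=b_{ij}^*$, slice $X^\downarrow$ into $N=\lceil\log_2\log_2 d\rceil$ horizontal blocks with row indices in $(2^{2^{n-1}},2^{2^n}]$, apply the moment-method bound of Theorem~\ref{thm:bvhrect} to each slice --- where the dimensional factor $\sqrt{\log 2^{2^n}}=2^{n/2}$ is cancelled by the variance decay $b_{ij}\le\Gamma/\sqrt{\log i}\lesssim 2^{-n/2}\Gamma$ on that slice --- and pay $\sqrt{N}$ only in the assembly step $\|X^\downarrow\|^2\le\sum_{n}\|X^{(n)}\|^2$. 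Second, your plan to build admissible partitions of $B_2\times B_2$ out of those for $\{e_i\}\times B_2$ runs head-on into an obstruction the paper explicitly cites: even the trivial behavior of the supremum of a Gaussian process over a convex hull has no known generic-chaining description (Talagrand), so the ``transfer of chaining complexity'' between the two factors that you invoke is precisely the missing idea, not a step one can execute. For contrast, the paper's geometric progress avoids chaining altogether: Lemma~\ref{lem:basic} compares the natural metric $d$ with the Euclidean metric of the deformed ball $\{x(v):v\in B_2\}$, $x_i(v)=v_i\|v\|_i$, and the Slepian--Fernique inequality then yields Theorem~\ref{thm:main3ext}, which resolves the conjecture when the variance matrix $B$ is positive semidefinite but, because $B^-$ must be crudely controlled, not in general.
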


The lower bound in Conjecture~\ref{conj:1} holds trivially for any 
deterministic matrix: if a matrix has a row with large Euclidean norm, 
then its spectral norm must be large.  Conjecture~\ref{conj:1} suggests 
that for Gaussian random matrices, this is the \emph{only} reason why the 
spectral norm can be large.  It is not at all clear, however, what 
mechanism might give rise to this phenomenon, particularly as the Gaussian 
nature of the entries must play a crucial role for the conjecture to hold.

Recently, Bandeira and the author \cite{BvH15} proved a sharp 
dimension-dependent upper bound on $\|X\|$ (we refer to
\cite{BvH15} for a discussion of earlier work on this topic):

\begin{thm}[\cite{BvH15}]
\label{thm:bvh}
The expected spectral norm satisfies
$$
	\E\|X\| \lesssim
	\max_i\sqrt{\sum_j b_{ij}^2} +
	\max_{ij}b_{ij}\sqrt{\log d}.
$$
\end{thm}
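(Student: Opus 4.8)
The plan is to use the moment method. Since $X$ is symmetric, $\|X\|^{2p}\le\operatorname{tr}(X^{2p})$ for every positive integer $p$, so by Jensen it suffices to control $\E\operatorname{tr}(X^{2p})$ and then extract the bound by taking $2p$-th roots and optimizing over $p$. Writing $\sigma:=\max_i\sqrt{\sum_j b_{ij}^2}$ and $b_*:=\max_{ij}b_{ij}$, the target is an estimate of the form $\E\operatorname{tr}(X^{2p})\lesssim d\,(C\sigma+C b_*\sqrt{p})^{2p}$, uniformly in $p$. Taking $2p$-th roots and choosing $p\asymp\log d$ then finishes the argument, since the prefactor contributes only $d^{1/2p}=e^{\log d/2p}=O(1)$ for this choice of $p$, and the two terms inside the parenthesis become $\sigma$ and $b_*\sqrt{\log d}$.

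First I would expand the trace as a sum over closed walks, $\operatorname{tr}(X^{2p})=\sum_{i_0,\dots,i_{2p-1}}X_{i_0 i_1}X_{i_1 i_2}\cdots X_{i_{2p-1}i_0}$, and take the expectation using the Wick/Isserlis formula for Gaussian moments. Because distinct entries of $X$ are independent and centered, the expectation of each monomial factorizes over pairings of the $2p$ edges of the walk, and a pairing contributes only when the two edges of each pair correspond to the same matrix entry $\{i,j\}$, with weight $b_{ij}^2$. This reduces $\E\operatorname{tr}(X^{2p})$ to a sum, over closed walks in which every distinct edge is traversed an even number of times, of the product $\prod_{\text{edges}}b_{ij}^2$ weighted by the Gaussian moments of the multiplicities.

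The combinatorial heart is to organize this sum by the shape of the walk. The leading contribution comes from \emph{double-tree} walks, in which the $2p$ steps trace out a tree with $p$ edges, each traversed exactly twice: summing $\prod b_{ij}^2$ over such walks telescopes by peeling leaves one at a time, each peel of a leaf $j$ from its parent $i$ contributing $\sum_j b_{ij}^2\le\sigma^2$, so the total is at most $d$ (choice of root) times the Catalan number $C_p\le 4^p$ times $\sigma^{2p}$. This yields the $\sigma$ term. Every deviation from the double-tree shape reduces the number of free vertex summations, and the crucial trade-off is that a revisited vertex (creating a cycle), or an entry traversed $2m\ge 4$ times, no longer contributes a free row-sum $\le\sigma^2$ but instead a \emph{fixed} factor $\le b_*^2$ together with the Gaussian moment weight $(2m-1)!!$. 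I would show that summing these weighted contributions over all walk shapes produces exactly the $b_*\sqrt{p}$ term, the $\sqrt{p}$ originating from the factorial growth $((2p-1)!!)^{1/2p}\asymp\sqrt{p}$ of the Gaussian moments attached to the high-multiplicity, $b_*$-weighted edges.

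The main obstacle is precisely this defect accounting: one must show that degrading a walk away from a double tree costs a factor of $b_*$ (rather than the weaker, automatically-available $\sigma$) per unit of lost summation entropy, and that the number of walk shapes with a given defect budget grows only in a controlled way in $p$, so that the resulting series sums to $(C\sigma+C b_*\sqrt{p})^{2p}$. Getting this trade-off sharp is exactly what separates the desired bound $\sigma+b_*\sqrt{\log d}$ from the naive non-commutative Khintchine estimate $\sigma\sqrt{\log d}$: the whole point is that the $\sqrt{\log d}$ factor attaches to the largest entry $b_*$ and not to the largest row norm $\sigma$. Once the defect estimate is established with the correct powers, the optimization over $p$ is routine and yields the stated inequality.
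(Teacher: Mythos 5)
You should first note that this paper does not actually prove Theorem~\ref{thm:bvh}: it is quoted from \cite{BvH15}, so your proposal has to be measured against the proof given there, which is indeed the moment method you describe. Your skeleton --- bound $\E\,\mathrm{Tr}[X^{2p}]$ via Wick pairings over closed walks with even edge multiplicities, let double trees produce $\sigma:=\max_i(\sum_j b_{ij}^2)^{1/2}$, aim for $\E\,\mathrm{Tr}[X^{2p}]\lesssim d\,(C\sigma+Cb_*\sqrt{p}\,)^{2p}$ with $b_*:=\max_{ij}b_{ij}$, and take $p\asymp\log d$ --- is exactly the right frame, and the reduction at the end is routine as you say. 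The genuine gap is that the entire theorem is concentrated in the step you defer (``once the defect estimate is established with the correct powers''), and the mechanism you propose for it does not deliver those powers. The dominant defects are not high-multiplicity edges but cycle-creating steps whose edges are traversed exactly twice; these carry Gaussian moment $(2\cdot 1-1)!!=1$, so the double-factorial factor $((2p-1)!!)^{1/2p}\asymp\sqrt{p}$ you invoke is attached only to a subdominant family of shapes. For the cycle defects the cost is pure shape entropy, and that entropy is genuinely of order $p^2$ per defect: roughly $2p$ choices for where along the walk the defect occurs, times up to $m\sim p$ choices of which previously visited vertex it returns to, and these choices produce distinct shapes, so the count of shapes with $k$ defects really grows like $C^p p^{2k}$ (up to $1/k!$-type factors). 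Paired with the weight $b_*^2$ per defect, this accounting yields only $\E\,\mathrm{Tr}[X^{2p}]\lesssim d\,(C\sigma+Cb_*p)^{2p}$, i.e.\ $\E\|X\|\lesssim\sigma+b_*\log d$ --- the classical F\"uredi--Koml\'os-type loss --- and no amount of sharper direct enumeration of shapes will convert $p^{2k}$ into the $p^{k}$ that the target $(C\sigma+Cb_*\sqrt p)^{2p}$ requires.

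What closes this gap in \cite{BvH15} is a different idea, which your plan is missing. The point is that the entropy of ``which of the $m$ old vertices to return to'' should be paid by the weight $\sum_j b_{ij}^2\le\sigma^2$ rather than by $m\cdot b_*^2$; this is implemented not by counting shapes at all, but by comparing the sum over shapes with the \emph{exact} moment $\E\,\mathrm{Tr}[W_n^{2p}]$ of an $n\times n$ Gaussian Wigner matrix of effective dimension $n\asymp\sigma^2/b_*^2$ (suitably enlarged, $n\gtrsim p$, to cover the sparse regime where a shape may have more distinct vertices than $n$): in the complete graph on $n$ vertices a revisit automatically costs at most $n\,b_*^2\approx\sigma^2$, so the defect entropy is absorbed into the weight. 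One then needs a Wigner moment bound of the form $\E[\mathrm{Tr}\,W_n^{2p}]^{1/2p}\le C(2\sqrt{n}+\sqrt{p}\,)$ valid for \emph{all} $p$ and $n$, and this does not come from combinatorics either: it follows from $\E\|W_n\|\le 2\sqrt n$ (Slepian--Fernique/Gordon comparison) together with Gaussian concentration of $\|W_n\|$, or alternatively from a Harer--Zagier-type moment identity for the GUE. In other words, the true source of the $\sqrt p$ --- hence of the $\sqrt{\log d}$ attached to $b_*$ rather than to $\sigma$ --- is Gaussian concentration imported through the Wigner comparison, not the defect bookkeeping you sketch; without this (or an equivalent) ingredient, your outline proves only the weaker bound with $\log d$ in place of $\sqrt{\log d}$.
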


The combinatorial proof of this result sheds little light on the 
phenomenon described by Conjecture~\ref{conj:1}.  Nonetheless, the 
right-hand side of this expression is a natural upper bound on the 
right-hand side of Conjecture~\ref{conj:1} \cite[Remark 3.16]{BvH15}.  On 
the other hand, the terms in this bound admit another natural 
interpretation.  A simple computation shows that the first term in this 
bound is precisely $\|\E X^2\|^{1/2}$, while the second term is an upper 
bound on $\E\max_{ij}|X_{ij}|$.  This suggests the following alternative 
to Conjecture \ref{conj:1} that is also consistent with 
Theorem~\ref{thm:bvh}.

\begin{conj}
\label{conj:2}
The expected spectral norm satisfies
$$
	\E\|X\| \asymp \|\E X^2\|^{1/2} + \E\max_{ij}|X_{ij}|.
$$
\end{conj}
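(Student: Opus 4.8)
The plan is to treat the two directions separately; the lower bound is elementary and the entire difficulty resides in the upper bound.

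\emph{Lower bound.} I would extract the two terms independently. Since $X$ is symmetric, $\|X\|\ge\max_{ij}|X_{ij}|$ pointwise, and hence $\E\|X\|\ge\E\max_{ij}|X_{ij}|$. For the other term, note that $\E X^2$ is the deterministic diagonal matrix with $(\E X^2)_{ii}=\sum_j b_{ij}^2$, so $\|\E X^2\|=\max_i\sum_j b_{ij}^2$. Taking $v$ to be a top unit eigenvector of $\E X^2$, the vector $Xv$ is centered Gaussian with $\E\|Xv\|^2=\langle v,\E X^2 v\rangle=\|\E X^2\|$; a standard fact for Gaussian vectors gives $\E\|Xv\|\gtrsim(\E\|Xv\|^2)^{1/2}$ (splitting according to whether the largest coordinate variance dominates the total), and since $\|X\|\ge\|Xv\|$ this yields $\E\|X\|\gtrsim\|\E X^2\|^{1/2}$. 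Combining the two estimates gives the lower bound.

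\emph{Upper bound: set-up.} Because $X$ and $-X$ have the same law, $\E\|X\|\asymp\E\sup_{v\in S^{d-1}}\langle v,Xv\rangle$, so I would view $\|X\|$ as the supremum of the centered Gaussian process $G_v=\langle v,Xv\rangle=\sum_{ij}X_{ij}v_iv_j$ indexed by the sphere. Its canonical metric is $d(v,w)^2=\E(G_v-G_w)^2=\sum_{ij}b_{ij}^2(v_iv_j-w_iw_j)^2$, up to a fixed constant. By the Fernique--Talagrand majorizing measure theorem, $\E\sup_v G_v\asymp\gamma_2(S^{d-1},d)$, so the probabilistic problem reduces to the purely \emph{deterministic} estimate $\gamma_2(S^{d-1},d)\lesssim\|\E X^2\|^{1/2}+\E\max_{ij}|X_{ij}|$.

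\emph{Upper bound: strategy.} I would try to build an admissible sequence of partitions of $S^{d-1}$ keyed to the magnitudes of the $b_{ij}$, slicing the entries by dyadic scale $b_{ij}\in[2^k,2^{k+1})$ and estimating each slice's contribution to the chaining sum. The guiding heuristic is that $\|\E X^2\|^{1/2}$ should arise from the small-scale ``bulk'' behaviour of $G$, where the covariance governs the chaining, while $\E\max_{ij}|X_{ij}|$ should arise from the coarse scales, where the process degenerates into a finite maximum over the individual entry-Gaussians $X_{ij}$. In particular the sparse slices of large entries must contribute only their max-of-Gaussians value, not a full $\sqrt{\log d}$ factor; here Theorem~\ref{thm:bvh}, applied to individual slices or to submatrices of small effective size, can control the large-scale increments without incurring the global dimensional cost.

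\emph{Main obstacle.} The crux is exactly the replacement of the crude factor $\sqrt{\log d}$ attached to $\max_{ij}b_{ij}$ in Theorem~\ref{thm:bvh} by the sharp quantity $\E\max_{ij}|X_{ij}|$. This is delicate because $d$ is \emph{not} uniformly comparable to any multiple of the Euclidean metric on the sphere: bounding $d(v,w)\lesssim\max_{ij}b_{ij}\,\|v-w\|$ and applying Dudley's integral returns a spurious factor of order $\sqrt{d}$, which is precisely why the combinatorial proof of Theorem~\ref{thm:bvh} fails to localize the logarithm. Making the passage between the bulk and coarse regimes lossless requires the full strength of generic chaining with a partition scheme adapted to the sparsity pattern of $\{b_{ij}\}$, and I expect the available tools to leak a factor of order $\sqrt{\log\log d}$ at the transition between scales, matching the partial resolution announced in the abstract. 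Removing this residual factor, and thereby settling Conjecture~\ref{conj:2} in full, appears to demand either a new Gaussian comparison inequality or a genuinely sharp construction of the majorizing measure.
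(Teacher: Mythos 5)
You cannot have proved this statement, and you haven't: Conjecture~\ref{conj:2} is \emph{open} in the paper, which establishes only the trivial lower bound, the equivalence with Conjecture~\ref{conj:1} (Theorem~\ref{thm:main1}), and partial upper bounds. Your lower bound is correct and essentially the paper's: $\E\|X\|\ge\E\max_{ij}|X_{ij}|$ is immediate, $\E X^2$ is indeed diagonal with $\|\E X^2\|=\max_i\sum_j b_{ij}^2$, and your step $\E\|Xv\|\gtrsim(\E\|Xv\|^2)^{1/2}$ is a standard consequence of the Gaussian Poincar\'e inequality, paralleling the variance computation in the paper's proof of Theorem~\ref{thm:main1ext}. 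The gap is in the upper bound, where your text is a program rather than a proof. Concretely: (i) you never construct the admissible sequence of partitions, and the pivotal claim --- that the coarse scales ``degenerate into a finite maximum over the individual entry-Gaussians'' and hence contribute only $\E\max_{ij}|X_{ij}|$ rather than $\max_{ij}b_{ij}\sqrt{\log d}$ --- is precisely the content of the conjecture, so the argument is circular exactly at the point where it would have to do work; (ii) the reduction via majorizing measures to a ``purely deterministic'' bound on $\gamma_2(S^{d-1},d)$ is sound but vacuous as progress, and the paper explicitly warns (Section~\ref{sec:disc}) that even the convex-hull case of such a geometric description is a long-standing open problem, so formidable difficulties are known to lie on this route; (iii) your own closing paragraph concedes a $\sqrt{\log\log d}$ leak, which means at best you would reprove Theorem~\ref{thm:main2}, not the conjecture.

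For comparison with what the paper actually achieves: Theorem~\ref{thm:main2ext} obtains the $\sqrt{\log\log d}$ bound by a slicing argument close in spirit to yours, but the slices are \emph{horizontal} (by row index, after rearranging so that $\max_j b_{ij}$ is nonincreasing, cut at doubly exponential scales $2^{2^n}$), not by dyadic entry magnitude $b_{ij}\in[2^k,2^{k+1})$; this choice makes the effective dimension $2^{2^n}$ of each slice exactly cancel the variance budget $b_{ij}\le\Gamma/\sqrt{\log i}$ inside the $\max_{ij}c_{ij}\sqrt{\log(d_1\wedge d_2)}$ term of Theorem~\ref{thm:bvhrect}, and the paper identifies the loss as occurring precisely where you predict, in assembling the slices via $\|X^\downarrow\|^2\le\sum_n\|X^{(n)}\|^2$. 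The paper's genuinely dimension-free progress (Theorem~\ref{thm:main3ext}, Corollary~\ref{cor:posdef}) does not use chaining at all: it proves the comparison inequality of Lemma~\ref{lem:basic} for the canonical metric against the Euclidean geometry of the deformed ball $\{x(v):v\in B_2\}$ with $x_i(v)=v_i\|v\|_i$, and applies Slepian--Fernique, which settles Conjecture~\ref{conj:1} (hence~\ref{conj:2}) when the variance matrix $B=\{b_{ij}^2\}$ is positive semidefinite. If you wish to pursue your program, the paper's own diagnosis is the right target: either show $\|X^\downarrow\|^2\approx\max_n\|X^{(n)}\|^2$ in the slicing scheme, or exploit the cancellation between $B^+$ and $B^-$ that the crude estimate $-B\preceq B^-$ discards.
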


Once again, the lower bound in Conjecture~\ref{conj:2} holds trivially 
(cf.\ \cite[section 3.5]{BvH15}): the first term follows readily from 
Jensen's inequality, while the second term follows as the spectral norm of 
any matrix is bounded below by the magnitude of its largest entry. Thus 
the two terms in the lower bound reflect two distinct mechanisms that 
control the spectral norm of any random matrix: a random matrix has large 
spectral norm if it is large on average (as is quantified by $\|\E 
X^2\|^{1/2}$; note that the expectation here is inside the norm!), or if 
one of its entries is large (as is quantified by 
$\mathbf{E}\max_{ij}|X_{ij}|$). Conjecture~\ref{conj:2} suggests that for 
Gaussian random matrices, these are the \emph{only} reasons why the 
spectral norm can be large.

In many cases the improvement of Conjectures~\ref{conj:1} and \ref{conj:2} 
over Theorem~\ref{thm:bvh} is modest, as the latter bound is already tight 
under mild assumptions.  On the one hand, if $\max_i\sum_j b_{ij}^2 
\gtrsim \max_{ij} b_{ij}^2 \log d$, then the first term in 
Theorem~\ref{thm:bvh} dominates and therefore $\E\|X\|\asymp \|\E X^2\|^{1/2}$ as 
predicted by Conjecture~\ref{conj:2}.  On the other hand, if a polynomial 
number $\gtrsim d^c$ of entries $X_{kl}$ of the matrix have variance of 
the same order as the largest variance $b_{kl}\gtrsim \max_{ij}b_{ij}$, 
then $\E\max_{ij}|X_{ij}|\sim \max_{ij}b_{ij}\sqrt{\log d}$ and thus 
Theorem~\ref{thm:bvh} also implies Conjecture~\ref{conj:2}. These 
observations indicate that Theorem~\ref{thm:bvh} already implies 
Conjecture~\ref{conj:2} when the matrix is ``not too sparse''. 
Nonetheless, the apparent sharpness of Theorem~\ref{thm:bvh} belies a 
fundamental gap in our understanding of the probabilistic mechanisms that 
control the spectral norm of Gaussian random matrices: the phenomena 
predicted by Conjectures~\ref{conj:1} and \ref{conj:2} are inherently 
dimension-free, while the assumptions under which Theorem~\ref{thm:bvh} is 
tight exhibit nontrivial dependence on dimension. The resolution of 
Conjectures~\ref{conj:1} and \ref{conj:2} would therefore provide a 
substantially deeper insight into the structure of Gaussian random 
matrices than is obtained from Theorem~\ref{thm:bvh}.

The aim of this paper is to develop a number of new techniques and 
insights that contribute to a deeper understanding of 
Conjectures~\ref{conj:1} and \ref{conj:2}.  While our results fall short 
of resolving these conjectures, they provide strong evidence for their 
validity and shed significant light on the geometry of the problem.

We begin by observing that Conjectures~\ref{conj:1} and \ref{conj:2} are 
in fact equivalent, which is not entirely obvious at first sight.  In 
fact, our first result provides an explicit expression for the right-hand 
side in Conjectures~\ref{conj:1} and \ref{conj:2} in terms of the 
coefficients $b_{ij}$. (A much more complicated expression in terms of 
Musielak-Orlicz norms can be found in \cite{RS13}, but is too unwieldy to 
be of use in the sequel.)

\begin{thm}
\label{thm:main1}
Conjectures~\ref{conj:1} and \ref{conj:2} are equivalent:
\begin{align*}
	\E\Bigg[\max_i\sqrt{\sum_j X_{ij}^2}\Bigg]
	&\asymp
	\|\E X^2\|^{1/2} + \E\max_{ij}|X_{ij}|
	\\
	&\asymp
        \max_i\sqrt{\sum_j b_{ij}^2} +
        \max_{ij}b_{ij}^*\sqrt{\log i},
\end{align*}
where the matrix $\{b_{ij}^*\}$ is obtained by permuting the rows and 
columns of the matrix $\{b_{ij}\}$ such that $\max_jb_{1j}^*\ge
\max_j b_{2j}^*\ge\cdots\ge\max_j b_{dj}^*$.
\end{thm}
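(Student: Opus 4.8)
The plan is to reduce everything to two scalar quantities, $r := \max_i\sqrt{\sum_j b_{ij}^2}$ and $s := \max_{ij}b_{ij}^*\sqrt{\log i}$, and to prove that each of the three expressions in the statement is $\asymp r + s$; the asserted equivalences then follow by transitivity. First I would record the elementary identity $\|\E X^2\|^{1/2} = r$: since the $g_{ij}$ are independent modulo symmetry, $\E[X_{ij}X_{jk}]$ vanishes unless $i=k$, so $\E X^2$ is the diagonal matrix with entries $\sum_j b_{ij}^2$ and its operator norm is $\max_i\sum_j b_{ij}^2$. This identifies the first term on the right-hand side and reduces the Conjecture~\ref{conj:2} expression to $r + \E\max_{ij}|X_{ij}|$.

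Next I would treat the maximal entry. Here $\max_{ij}|X_{ij}| = \max_{ij}b_{ij}|g_{ij}|$ is a maximum of (modulo symmetry) independent centered Gaussians with standard deviations $b_{ij}$, and the classical estimate for the expected maximum of independent Gaussians gives $\E\max_{ij}|X_{ij}|\asymp\max_k\sigma_k\sqrt{\log(k+1)}$, where $\sigma_1\ge\sigma_2\ge\cdots$ are all of the $b_{ij}$ sorted in decreasing order. The point is to convert this ``all entries'' ordering into the ``row maxima'' ordering $s$, and this is exactly where the symmetry of $X$ enters: because $b_{ij}=b_{ji}$, each large entry forces two distinct rows to have a large maximum, while conversely any $k$ entries meet at least of order $\sqrt{k}$ rows (a clique being extremal). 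A short double counting argument in both directions — the top $i$ row maxima arise from at least $i/2$ distinct entries, and the top $k$ entries inflate the maxima of at least $\sqrt{2k}$ rows — then yields $\max_k\sigma_k\sqrt{\log k}\asymp s$. Together with $\sigma_1 = \max_{ij}b_{ij}\le r$ this gives $\E\max_{ij}|X_{ij}|\lesssim r + s$ and $\gtrsim s$, so the Conjecture~\ref{conj:2} expression is $\asymp r+s$.

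The heart of the proof is the two-sided bound $\E[\max_i\|X_{i\cdot}\|_2]\asymp r + s$ for the row norms $\|X_{i\cdot}\|_2 = \sqrt{\sum_j X_{ij}^2}$. The lower bound is routine: $\E[\max_i\|X_{i\cdot}\|_2]\ge\max_i\E\|X_{i\cdot}\|_2\gtrsim r$ by a one-line variance argument, while $\E[\max_i\|X_{i\cdot}\|_2]\ge\E\max_{ij}|X_{ij}|\gtrsim s$ since each row norm dominates its largest entry. The upper bound is the main obstacle and the place where the precise form of $s$ is needed. For fixed $i$ the entries of row $i$ are genuinely independent Gaussians, and $\|X_{i\cdot}\|_2$ is a $\beta_i$-Lipschitz function of them with $\beta_i := \max_j b_{ij}$ and mean at most $r$, so Gaussian concentration gives $\P(\|X_{i\cdot}\|_2\ge r+t)\le e^{-t^2/2\beta_i^2}$. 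A union bound over rows, with the rows indexed in decreasing order of $\beta_i$ so that $\beta_i\le s/\sqrt{\log i}$, yields $\P(\max_i\|X_{i\cdot}\|_2\ge r+t)\le e^{-t^2/2\beta_1^2}+\sum_{i\ge 2} i^{-t^2/2s^2}$. Integrating this tail — splitting at $t\asymp s$, bounding the probability by $1$ below the threshold and comparing the sum to $\int_1^\infty x^{-t^2/2s^2}\,dx$ above it — produces $\int_0^\infty\P(\max_i\|X_{i\cdot}\|_2\ge r+t)\,dt\lesssim r+s$, whence $\E[\max_i\|X_{i\cdot}\|_2]\lesssim r+s$.

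I anticipate that the delicate point is organizing the tail integration so that the ordered row maxima produce the bound $s$ rather than a dimensional factor: the definition of $s$ must be used precisely in the form $\beta_i\sqrt{\log i}\le s$ to turn the Gaussian tails into the summable family $i^{-t^2/2s^2}$, and the contribution of the largest row (where $\log i=0$) must be absorbed into $r$ via $\beta_1\le r$. The symmetry-based comparison of the two orderings in the maximal-entry step is conceptually the most important ingredient, as it is what makes the row-maxima normalization the correct one.
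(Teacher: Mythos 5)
Your proof is correct, and its global skeleton matches the paper's: both show that each quantity is $\asymp r+s$ with $r=\max_i\sqrt{\sum_j b_{ij}^2}$ and $s=\max_{ij}b_{ij}^*\sqrt{\log i}$, both get the upper bound on the row-norm maximum from Jensen plus per-row Gaussian concentration with Lipschitz constant $\beta_i=\max_j b_{ij}$ followed by a union bound over rows sorted by $\beta_i$ (you re-derive by hand, via the tail integration with $\sum_{i\ge 2}i^{-t^2/2s^2}$, exactly what the paper invokes as its Lemma~\ref{lem:supg1}), and both get the lower bound $\gtrsim r$ from the Gaussian Poincar\'e inequality. The one place where you take a genuinely different route is the lower bound $\E\max_{ij}|X_{ij}|\gtrsim s$ and the matching upper bound: you apply the two-sided classical estimate to \emph{all} entries, obtaining $\asymp\max_k\sigma_k^*\sqrt{\log(k+1)}$ in the all-entries ordering, and then convert to the row-maxima ordering by the symmetric double-counting argument (each entry is the maximum of at most two rows, so the top $i$ row maxima use $\ge i/2$ distinct entries; conversely $k$ entries touch $\gtrsim\sqrt{k}$ rows, and $\log\sqrt{k}=\tfrac12\log k$ costs only a constant). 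The paper avoids this comparison of orderings entirely: for the lower bound it selects one maximal entry $j(i)$ per row and applies the reverse Gaussian maxima estimate (Lemma~\ref{lem:supg2}) to $\max_i|X_{ij(i)}|$, whose variances are already arranged in the row-maxima order; and it never bounds $\E\max_{ij}|X_{ij}|$ from above at the entry level at all, instead sandwiching the middle expression between $r+s$ and the row-norm maximum. Your double counting is slightly longer but buys an explicit, self-contained proof that the all-entries and row-maxima normalizations of the $\sqrt{\log}$ term are equivalent for symmetric variance patterns --- a fact the paper's selection trick sidesteps (and which, incidentally, makes transparent the mild ``an entry can serve two rows'' issue that the paper's application of Lemma~\ref{lem:supg2} glosses over). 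One caveat: the paper actually proves the stronger Theorem~\ref{thm:main1ext}, including the equivalence with $\E[\max_i(\sum_j b_{ij}^2g_j^2)^{1/2}]$ and the sharp-constant form of Remark~\ref{rem:main1sharp} needed later in section~\ref{sec:geom}; your argument establishes Theorem~\ref{thm:main1} as stated but would need the same estimates repeated for that dependent-coordinates variant to serve the later sections.
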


As the bound of Theorem \ref{thm:bvh} appears to be tantalizingly close to 
the expression in Theorem \ref{thm:main1}, one might hope 
that the latter could be established by a refinement of the methods that 
were developed in \cite{BvH15}.  The proof of Theorem \ref{thm:bvh} in 
\cite{BvH15} relies heavily on the moment method, which is widely used in 
the 
analysis of random matrices.  This method is based on the elementary 
observation that $\|X\|^{2p} \le \mathrm{Tr}[X^{2p}] \le d\|X\|^{2p}$ for 
any $d\times d$ symmetric matrix $X$ and $p\ge 1$, so that
$$
        \E[\|X\|^{2p}]^{1/2p} \asymp
        \E[\mathrm{Tr}[X^{2p}]]^{1/2p}\quad
        \mbox{for } p\sim\log d.
$$
The essential feature of the moment method is that the right-hand side of 
this expression is the expectation of a polynomial in the entries of the 
matrix, which admits an explicit expression that is amenable to 
combinatorial analysis.  By its very nature, any proof using the 
moment method cannot directly bound $\E\|X\|$; instead, this method bounds 
the larger quantity $\E[\|X\|^{\log d}]^{1/\log d}$, which is 
what is actually done in \cite{BvH15}. For the latter quantity, however, 
it is readily seen that the result of Theorem \ref{thm:bvh} is already 
sharp \emph{without} any additional assumptions:
$$
        \E[\|X\|^{\log d}]^{1/\log d} \asymp
	\max_i\sqrt{\sum_j b_{ij}^2} +
	\max_{ij}b_{ij}\sqrt{\log d}.
$$
The upper bound is proved in \cite{BvH15}, while the lower bound follows 
along the lines of Conjecture~\ref{conj:2}
from the estimate
$\E[\|X\|^{\log d}]^{1/\log d}\gtrsim\|\E 
X^2\|^{1/2}+\max_{ij}\|X_{ij}\|_{\log d}$.
We therefore see that the moment method is exploited optimally in 
the proof of Theorem \ref{thm:bvh}, so that the resolution of 
Conjectures~\ref{conj:1} and \ref{conj:2} \emph{cannot} be 
addressed by the same technique that gave rise to Theorem \ref{thm:bvh}.

Nonetheless, by a slicing procedure that applies Theorem~\ref{thm:bvh} 
separately at different scales, we can already establish that 
Conjectures~\ref{conj:1} and \ref{conj:2} hold up to a very mild
dimensional factor.  This is our second main result.

\begin{thm}
\label{thm:main2}
The expected spectral norm satisfies
$$
	\E\Bigg[\max_i\sqrt{\sum_j X_{ij}^2}\Bigg]
	\le
	\E\|X\| \lesssim
	\sqrt{\log\log d}~
	\E\Bigg[\max_i\sqrt{\sum_j X_{ij}^2}\Bigg].
$$
\end{thm}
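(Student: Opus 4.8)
The lower bound is immediate, so the entire content lies in the upper bound. Write $\sigma:=\|\E X^2\|^{1/2}=\max_i\sqrt{\sum_jb_{ij}^2}$ and, following Theorem~\ref{thm:main1}, let $\beta_1\ge\beta_2\ge\cdots\ge\beta_d$ denote the sorted row maxima $\beta_i:=\max_jb_{ij}^*$ and set $M:=\max_i\beta_i\sqrt{\log i}$, so that the target quantity satisfies $R:=\E[\max_i\sqrt{\sum_jX_{ij}^2}]\asymp\sigma+M$. Since $X$ is symmetric, $\E\|X\|\asymp\E\sup_{\|v\|=1}\langle v,Xv\rangle$, and I would work throughout with this variational description of the norm rather than with any moment expansion.

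The plan is to slice not the matrix but the sphere: partition the unit sphere into $N\asymp\log d$ classes $V_0,\dots,V_{N-1}$ indexed by a dimensional scale, where $V_m$ collects the unit vectors whose action on $X$ is effectively confined to the principal submatrix spanned by the $2^m$ rows of largest $\beta_i$, with the entries of magnitude exceeding $\beta_{2^m}$ peeled off into the smaller classes. For $v\in V_m$ the quadratic form $\langle v,Xv\rangle$ is then controlled by a $2^m\times2^m$ Gaussian submatrix whose largest coefficient is of order $\beta_{2^m}$, and applying Theorem~\ref{thm:bvh} at dimension $2^m$ to this submatrix yields $\E\sup_{v\in V_m}\langle v,Xv\rangle\lesssim\sigma+\beta_{2^m}\sqrt{\log 2^m}$. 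The crucial point is the elementary inequality $\beta_{2^m}\sqrt{\log 2^m}\le M$, which holds by the very definition of $M$; equivalently, thresholding the entries at level $t$ produces a matrix supported on the at most $e^{M^2/t^2}$ rows with $\beta_i>t$, so that the dimensional factor $\sqrt{\log(\mathrm{dim})}$ supplied by Theorem~\ref{thm:bvh} is exactly cancelled by the magnitude $t$. Hence every class contributes at most $O(R)$: this is the precise sense in which Theorem~\ref{thm:bvh} is applied ``separately at different scales''.

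It remains to reassemble the classes. Because the $V_m$ partition the sphere, $\sup_{\|v\|=1}\langle v,Xv\rangle=\max_m W_m$ with $W_m:=\sup_{v\in V_m}\langle v,Xv\rangle$, and the essential gain is that this is a \emph{maximum} over the $N\asymp\log d$ scales rather than a sum. Each $W_m$ is a $b_*$-Lipschitz function of the independent Gaussians $g_{ij}$ (where $b_*:=\max_{ij}b_{ij}$), hence sub-Gaussian with parameter $b_*$ about its mean by Gaussian concentration. The Gaussian maximal inequality then gives $\E\max_m W_m\le\max_m\E W_m+Cb_*\sqrt{\log N}\lesssim R+R\sqrt{\log\log d}$, where I use $b_*\le\sigma\le R$ and $\log N\asymp\log\log d$. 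This yields $\E\|X\|\lesssim\sqrt{\log\log d}\,R$, as desired. I stress that passing through $\max_m W_m$ is what makes the argument succeed: the naive bound $\|X\|\le\sum_m\|X_m\|$ obtained by slicing the \emph{matrix} into disjoint magnitude scales $X_m$ and applying the triangle inequality would reinstate a full factor $\sqrt{\log d}$, since the means $\E\|X_m\|$ then add.

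The main obstacle is the construction of the partition $\{V_m\}$ together with the per-class estimate $\E W_m\lesssim R$. The naive choice---vectors supported on the top $2^m$ coordinates---fails on two counts: taking a supremum over all $2^m$-subsets of coordinates reintroduces a $\sqrt{2^m\log d}$ entropy term, and such a submatrix still contains the largest, isolated entries that Theorem~\ref{thm:bvh} charges at the full magnitude $b_*$. The classes must therefore be designed to control the effective dimension and the effective magnitude simultaneously, so that within $V_m$ the contribution of the few very large entries is genuinely negligible (or has been reassigned to a lower class). Making this reduction rigorous---so that $\langle v,Xv\rangle$ for $v\in V_m$ really is dominated, uniformly over the class, by a truncated $2^m$-dimensional Gaussian submatrix of largest coefficient of order $\beta_{2^m}$---is the delicate technical heart of the proof, and is precisely where the geometry of the underlying Gaussian process must be exploited.
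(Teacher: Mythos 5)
Your reduction to $R\asymp\sigma+M$ via Theorem~\ref{thm:main1}, the observation that each $W_m$ is Lipschitz in the underlying Gaussians with constant $\lesssim b_*$, and the final maximal-inequality assembly are all sound, but the proof has a genuine gap at exactly the point you flag as its ``delicate technical heart'': the partition $\{V_m\}$ of the sphere with the per-class bound $\E\sup_{v\in V_m}\langle v,Xv\rangle\lesssim\sigma+\beta_{2^m}\sqrt{\log 2^m}$ is never constructed, and there is no reason to believe it exists. A unit vector with mass spread across many coordinate scales picks up cross terms $X_{ij}$ with $i\le 2^m<j$ that lie in no small principal submatrix, so no class can be ``effectively confined'' by support considerations; and ``peeling off'' the large entries is a decomposition of the \emph{matrix}, not of the sphere, so the peeled part contributes an additive error to $\langle v,Xv\rangle$ for every $v$ in the class --- the supremum of a sum does not split as a maximum of suprema. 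What you are really postulating is that the supremum over the whole ball is comparable to the \emph{maximum} of suprema over individual scales, and this is precisely the strengthening that the paper singles out at the end of Section~\ref{sec:slice} as the open obstacle: if one could show $\|X^\downarrow\|^2\approx\max_n\|X^{(n)}\|^2$, even the full Conjecture~\ref{conj:1} would follow, but the paper states it is far from clear how to make this precise. Indeed, your scheme, if completed, would yield $\E\|X\|\lesssim R+b_*\sqrt{\log\log d}$, strictly stronger than the theorem --- a warning sign that the missing construction is where all the difficulty lives.

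Your reason for rejecting matrix slicing is also mistaken, and it obscures the actual proof. The paper does not slice into magnitude scales and apply the triangle inequality; it slices $X^\downarrow$ (the below-diagonal part, with $\E\|X\|\le 2\,\E\|X^\downarrow\|$) into \emph{horizontal row blocks} with rows $2^{2^{n-1}}<i\le 2^{2^n}$ after sorting so that $b_{ij}=b_{ij}^*$. Because the blocks have disjoint row supports, $X^{\downarrow*}X^\downarrow=\sum_n X^{(n)*}X^{(n)}$, which yields the Pythagorean assembly $\|X^\downarrow\|^2\le\sum_n\|X^{(n)}\|^2$ at a cost of only $\sqrt{N}$, not the factor $N$ charged by the triangle inequality. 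The doubly exponential scale is the second point you miss: within the $n$-th block every entry is at most $\Gamma/\sqrt{\log 2^{2^{n-1}}}\lesssim 2^{-n/2}\Gamma$ while the block is effectively $2^{2^n}\times 2^{2^n}$, so Theorem~\ref{thm:bvhrect} gives $\E[\|X^{(n)}\|^2]^{1/2}\lesssim\sigma+2^{-n/2}\Gamma\sqrt{\log 2^{2^n}}\lesssim\sigma+\Gamma$ per slice --- the same cancellation as your $\beta_{2^m}\sqrt{\log 2^m}\le M$, but now only $N=\lceil\log_2\log_2 d\rceil$ slices are needed, so $\E\|X\|\lesssim\sqrt{N}(\sigma+\Gamma)\lesssim\sqrt{\log\log d}\,R$ with no sphere partition at all.
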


While this result still exhibits an explicit dependence on dimension, the 
point of Theorem~\ref{thm:main2} is that the very mild dimensional factor 
$\sqrt{\log\log d}$ is of much smaller order than the natural scale 
$\sim\sqrt{\log d}$ that appears in the sharp dimension-dependent bound of 
Theorem~\ref{thm:bvh}; in this sense, Theorem~\ref{thm:main2} could be 
viewed as providing significant evidence for validity of 
Conjectures~\ref{conj:1} and \ref{conj:2}.

In the final part of this paper, we develop an entirely different approach 
for bounding the spectral norm of Gaussian random matrices.  Unlike the 
methods developed so far, this approach is genuinely dimension-free and 
sheds significant light on the probabilistic mechanism that lies at the 
heart of Conjectures~\ref{conj:1} and \ref{conj:2}.  The starting point 
for this approach is the elementary observation that
$$
        \E\|X\| = \E\bigg[\sup_{v\in B_2}|\langle v,Xv\rangle|\Bigg]
$$
is the expected supremum of a Gaussian process indexed by the Euclidean 
unit ball $B_2$.  It is well known that such quantities are completely 
characterized, up to universal constants, by the geometry of the 
metric space $(B_2,d)$, where 
$$
        d(v,w)^2:=\E[|\langle v,Xv\rangle-\langle w,Xw\rangle|^2]
$$
is the natural metric associated with the Gaussian process (cf.\ 
\cite{Tal14}).  Therefore, in principle, understanding the spectral norm 
of Gaussian random matrices requires ``only'' a sufficiently good 
understanding of the geometry of the metric space $(B_2,d)$.
To this end, we show that the geometry of $(B_2,d)$ can be related to
the Euclidean geometry of certain nonlinear deformations of the unit ball.
The geometric structure exhibited by this mechanism appears to almost 
resolve Conjectures~\ref{conj:1} and \ref{conj:2}, but we do not know how 
to optimally exploit this structure.  Even a crude application 
of this idea, however, suffices to prove a nontrivial dimension-free 
bound.

\begin{thm}
\label{thm:main3}
The expected spectral norm satisfies
$$
	\E\|X\|\lesssim
        \max_i\sqrt{\sum_j b_{ij}^2} +
        \max_{i}\bigg[\sum_j b_{ij}^4\bigg]^{1/4}\sqrt{\log i}.
$$
\end{thm}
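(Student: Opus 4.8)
The plan is to exploit the representation of $\E\|X\|$ as the expected supremum of a Gaussian process and reduce the problem to a chaining estimate in the natural metric. Since $X$ is symmetric, $\|X\|=\sup_{v\in B_2}|\langle v,Xv\rangle|$, and because the Gaussian matrix is symmetric in law ($X\overset{d}{=}-X$) one has $\E\|X\|\asymp\E\sup_{v\in B_2}\langle v,Xv\rangle$. The process $v\mapsto G_v:=\langle v,Xv\rangle$ is Gaussian, so by the majorizing measure theorem (cf.\ \cite{Tal14}) it suffices to control the geometry of $(B_2,d)$ with $d(v,w)^2=\E[(G_v-G_w)^2]$. A direct computation using independence of $\{X_{ij}:i\ge j\}$ gives
$$
 d(v,w)^2\asymp\sum_{ij}b_{ij}^2\,(v_iv_j-w_iw_j)^2,
$$
so that $d$ is, up to constants, the Euclidean metric pulled back through the nonlinear map $\Phi(v)=(b_{ij}v_iv_j)_{ij}$; bounding $\E\|X\|$ amounts to bounding the Gaussian width of the deformed ball $\Phi(B_2)$.

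The second step is to linearize this metric and record the two constraints that drive the estimate. Writing $v_iv_j-w_iw_j=v_j(v_i-w_i)+w_i(v_j-w_j)$ and expanding yields
$$
 d(v,w)^2\lesssim\sum_i(v_i-w_i)^2\big(\rho_i(v)^2+\rho_i(w)^2\big),\qquad \rho_i(v)^2:=\sum_j b_{ij}^2 v_j^2 ,
$$
so $d$ is dominated by a weighted Euclidean metric with local weights $\rho_i(v)$. These weights obey two competing bounds. First, Cauchy--Schwarz in the definition of $\rho_i$ pairs $b_{ij}^2$ with $v_j^2$ and gives the pointwise estimate $\rho_i(v)\le\beta_i$ with $\beta_i:=[\sum_j b_{ij}^4]^{1/4}$, since $\|v\|_4\le\|v\|_2\le1$; this is precisely where the $\ell^4$ norm of the rows enters. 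Second, summing over $i$ and using $b_{ij}=b_{ji}$ gives the global budget $\sum_i\rho_i(v)^2=\sum_j v_j^2\sum_i b_{ij}^2\le A^2$ with $A:=\max_i(\sum_j b_{ij}^2)^{1/2}$. The budget is the source of the dimension-free behavior: although each coordinate may carry weight up to $\beta_i$, no vector in $B_2$ can make many weights large at once.

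Third, I would run a dimension-free chaining argument on $(B_2,d)$ organized around these two constraints, after sorting the rows so that $\beta_1\ge\beta_2\ge\cdots$ (legitimate since $\|X\|$ is invariant under simultaneous row/column permutations). The diameter of $(B_2,d)$ is of order $\beta_1\le A$, so the coarsest scales contribute a term of order $A$, with the budget constraint controlling the effective dimension at each scale. At finer scales one resolves coordinate $i$ only once the scale falls below $\beta_i$, at which point roughly $i$ coordinates are active, so the entropy contributed is of order $\log i$; since the increment carried by coordinate $i$ is at most $\beta_i$, the chaining sum localizes to $\sup_i\beta_i\sqrt{\log i}$. Combining the two regimes produces the claimed bound
$$
 \E\|X\|\lesssim A+\max_i\beta_i\sqrt{\log i}=\max_i\sqrt{\sum_j b_{ij}^2}+\max_i\Big[\sum_j b_{ij}^4\Big]^{1/4}\sqrt{\log i}.
$$

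The main obstacle is precisely this chaining step. The two constraints pull in opposite directions: using only the pointwise weight $\beta_i$ one recovers the far-too-large Frobenius-type quantity $(\sum_i\beta_i^2)^{1/2}$, so the budget constraint must be woven into the construction of the admissible nets in order to separate the $A$ contribution from the sorted tail. A one-step net of $B_2$ is useless because its cardinality is dimension-dependent, which is why genuine (generic) chaining that spends the budget across scales is required. Finally, the argument is deliberately crude: the single use of Cauchy--Schwarz that produced $\rho_i(v)\le\beta_i$ discards the dependence of the weights on $v$, and it is exactly this loss that replaces the sharp quantities predicted by Conjectures~\ref{conj:1} and~\ref{conj:2} by their $\ell^4$ surrogates.
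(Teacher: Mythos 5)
Your first two steps are correct and coincide with the paper's point of departure: the representation of $\E\|X\|$ as a Gaussian supremum over $B_2$, the computation $d(v,w)^2\asymp\sum_{ij}b_{ij}^2(v_iv_j-w_iw_j)^2$, and the domination $d(v,w)^2\lesssim\sum_i(v_i-w_i)^2(\rho_i(v)^2+\rho_i(w)^2)$ (via $v_iv_j-w_iw_j=v_j(v_i-w_i)+w_i(v_j-w_j)$ and the symmetry $b_{ij}=b_{ji}$) are all valid, as are the two constraints $\rho_i(v)\le\beta_i$ and $\sum_i\rho_i(v)^2\le A^2$. But the step you yourself flag as ``the main obstacle'' is not an omission of routine detail; it is the entire content of the theorem, and your sketch of it cannot be completed along the lines indicated. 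Since generic chaining is sharp for Gaussian processes, any argument that at some stage replaces the $v$-dependent weights $\rho_i(v)$ by the uniform bounds $\beta_i$ must produce at least the $\gamma_2$-functional of $B_2$ in the metric $\sum_i\beta_i^2(v_i-w_i)^2$, which is comparable to $(\sum_i\beta_i^2)^{1/2}$ --- exactly the Frobenius-type quantity you reject. So the budget constraint must be woven into the admissible nets themselves, and no mechanism for doing this is given: the heuristic that ``coordinate $i$ is resolved once the scale falls below $\beta_i$, contributing entropy $\log i$'' is a Dudley-type count that does not survive here, and it silently replaces the chaining \emph{sum} over scales by a maximum without exhibiting the geometric decay that would justify doing so. The paper explicitly warns that a direct chaining attack on this geometry faces formidable difficulties: even the analogous description of suprema over convex hulls by generic chaining is a long-standing open problem (see the end of section~\ref{sec:disc}).

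The paper sidesteps chaining entirely via a Slepian--Fernique comparison, and that is the idea missing from your proposal. With $x_i(v)=v_i\rho_i(v)$, Lemma~\ref{lem:basic} shows that for every $\gamma>0$,
$$
	d(v,w)^2\le(2+\gamma+\gamma^{-1})\,\|x(v)-x(w)\|^2-\gamma\sum_{ij}(v_i^2-w_i^2)b_{ij}^2(v_j^2-w_j^2),
$$
where the correction term is essential because the naive comparison $d(v,w)\lesssim\|x(v)-x(w)\|$ genuinely fails (Example~\ref{ex:afonso}). The possibly negative quadratic form is repaired by adding an independent Gaussian $\sqrt{\gamma}\sum_iv_i^2Y_i$ with $Y\sim N(0,B^-)$, $B^-$ the negative part of $B=(b_{ij}^2)$; Slepian--Fernique then bounds $\E\sup_{v\in B_2}\langle v,Xv\rangle$ by the expected supremum of the comparison process, which splits into two explicitly computable pieces: $\sup_{v\in B_2}\langle x(v),g\rangle\le\max_i\sqrt{\sum_jb_{ij}^2g_j^2}$ by Cauchy--Schwarz (controlled by Theorem~\ref{thm:main1ext}), and $\sup_{v\in B_2}\sum_iv_i^2Y_i\le\max_iY_i$, controlled by Lemma~\ref{lem:supg1} together with $\mathrm{Var}(Y_i)=B^-_{ii}\le(\sum_jb_{ij}^4)^{1/2}$, which follows from $B^2=(B^+)^2+(B^-)^2$. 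Note in particular that the $\ell^4$ quantity enters through the diagonal of $B^-$, not through your estimate $\rho_i(v)\le\beta_i$, which plays no role in the actual proof --- further indication that your sketch is not a compressed version of a working argument. Until you either carry out a weight-aware net construction or supply a comparison-type substitute for chaining, the proposal does not prove the theorem.
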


It is instructive to compare this bound with the expression in
Theorem \ref{thm:main1}.  Using $2\sqrt{ab}\le a+b$, it is readily seen 
that Theorem \ref{thm:main3} implies the bound
\begin{align*}
	\E\|X\| &\lesssim
        \max_i\sqrt{\sum_j b_{ij}^2} +
        \max_{i}\bigg[\sum_j b_{ij}^2 
	\bigg]^{1/4}
	\max_{ij}\sqrt{b_{ij}^*\log i} \\ & \lesssim
        \max_i\sqrt{\sum_j b_{ij}^2} +
	\max_{ij}b_{ij}^*\log i.
\end{align*}
While this estimate falls slightly short of the conjectured optimal bound 
of Theorem~\ref{thm:main1} (due to the wrong power on the logarithm), it 
is dimension-free precisely in the expected manner.  Together with the 
natural geometric structure exhibited in the proof, this provides further 
evidence for the validity of Conjectures~\ref{conj:1} and \ref{conj:2}. 
The result of Theorem~\ref{thm:main3} is complementary to 
Theorem~\ref{thm:bvh}: while Theorem~\ref{thm:bvh} is often sharp, 
Theorem~\ref{thm:main3} can give a substantial improvement for highly 
inhomogeneous matrices.  For example, Theorem~\ref{thm:main3} readily 
implies the dimension-free bound of Lata{\l}a \cite{Lat05}, which could 
not be reproduced using Theorem~\ref{thm:bvh}.

The statement of Theorem \ref{thm:main3} was chosen for sake of 
illustration; it is in fact a direct consequence of a sharper bound that 
arises from the proof.  This sharper bound both improves somewhat on 
Theorem \ref{thm:main3} for arbitrary matrices, and is able to establish 
the validity of Conjectures~\ref{conj:1} and \ref{conj:2} in certain 
special cases.  For example, we will establish these conjectures under the 
assumption that the matrix of variances $\{b_{ij}^2\}$ is positive 
definite or has a small number of negative eigenvalues. While these 
special cases are restrictive, they emphasize that the underlying 
geometric principle is not yet exploited optimally in the proof.  The 
elimination of this inefficiency provides a promising route to the 
resolution of Conjectures \ref{conj:1} and \ref{conj:2}.

The ideas described above are developed in detail in the sequel.
Our main results, Theorems \ref{thm:main1}, \ref{thm:main2}, and 
\ref{thm:main3}, are proved 
in sections \ref{sec:gaus}, \ref{sec:slice}, and \ref{sec:geom}, 
respectively.

\section{Gaussian estimates}
\label{sec:gaus}

The aim of this section is to prove Theorem~\ref{thm:main1}.  We will, in 
fact, consider an additional quantity beside those that appear in 
Conjectures~\ref{conj:1} and \ref{conj:2}.  Let $g_1,\ldots,g_d$ be 
independent standard Gaussian variables, and consider the quantity
$$
	\E\Bigg[
	\max_i\sqrt{\sum_j b_{ij}^2g_j^2}	
	\Bigg].
$$
This quantity will appear naturally from the geometry that is to be 
developed in section~\ref{sec:geom} below.  
The maximum is taken here over random variables 
with the same distribution as in Conjecture~\ref{conj:1}
 (note that these quantities differ only in that 
$b_{ij}^2g_{j}^2$ is replaced by $X_{ij}^2=b_{ij}^2g_{ij}^2$); however, in 
the above quantity these variables are dependent, while the maximum is 
taken over independent variables in Conjecture~\ref{conj:1}.  
Nonetheless, these quantities prove to be of the same order.  The 
equivalence of the various quantities considered below indicates that the 
phenomena described by Conjectures \ref{conj:1} and \ref{conj:2} can 
appear in many different guises, providing us with substantial freedom in 
how to approach the proof of these conjectures.

\begin{thm}
\label{thm:main1ext}
The following quantities are of the same order:
\begin{align*}
	\E\Bigg[\max_i\sqrt{\sum_j X_{ij}^2}\Bigg]
	&\asymp
	\E\Bigg[\max_i\sqrt{\sum_j b_{ij}^2g_j^2}\Bigg]
	\\
	&\asymp
	\|\E X^2\|^{1/2} + \E\max_{ij}|X_{ij}|
	\\
	&\asymp
        \max_i\sqrt{\sum_j b_{ij}^2} +
        \max_{ij}b_{ij}^*\sqrt{\log i},
\end{align*}
where we recall that the matrix $\{b_{ij}^*\}$ is obtained by permuting 
the rows and columns of the matrix $\{b_{ij}\}$ such that 
$\max_jb_{1j}^*\ge\max_j b_{2j}^*\ge\cdots\ge\max_j b_{dj}^*$.
\end{thm}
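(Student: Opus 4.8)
The plan is to show that the first three quantities are each comparable to the explicit fourth quantity; the stated equivalences then follow by transitivity. Write $\sigma:=\max_i\sqrt{\sum_j b_{ij}^2}$ for the first term of the fourth quantity and $\rho_i:=\max_j b_{ij}$ for the maximum of row $i$, so that the decreasing rearrangement $\rho_{(1)}\ge\rho_{(2)}\ge\cdots$ satisfies $\rho_{(i)}=\max_j b_{ij}^*$ and the second term of the fourth quantity is $\max_i\rho_{(i)}\sqrt{\log i}$. Two preliminary identifications dispatch the ``deterministic'' pieces: a direct computation shows that $\E X^2$ is diagonal with entries $\sum_j b_{ij}^2$, so that $\|\E X^2\|^{1/2}=\sigma$ exactly; and I would record the standard union-bound estimate that centered variables $Z_1,\ldots,Z_m$ with $\P(|Z_i|>t)\le 2e^{-t^2/2s_i^2}$ satisfy $\E\max_i|Z_i|\lesssim\max_i s_{(i)}\sqrt{\log(i+1)}$, which crucially requires \emph{no} independence.

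For the upper bounds I would treat all three quantities uniformly through Gaussian concentration of the row norms. For each $i$ the map $g\mapsto\sqrt{\sum_j b_{ij}^2 g_j^2}$ is $\rho_i$-Lipschitz, so $\|X_i\|_2$ and $W_i:=\sqrt{\sum_j b_{ij}^2 g_j^2}$ each concentrate about their means with subgaussian parameter $\rho_i$, while $\E\|X_i\|_2,\E W_i\le\sqrt{\sum_j b_{ij}^2}\le\sigma$. Writing $\E\max_i(\cdot)\le\max_i\E(\cdot)+\E\max_i[(\cdot)-\E(\cdot)]$ and feeding the centered maxima into the rearrangement estimate bounds both the first and second quantities by $\sigma+\max_i\rho_{(i)}\sqrt{\log i}$. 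The same rearrangement estimate applied directly to the independent entries $X_{ij}$ controls $\E\max_{ij}|X_{ij}|$, and hence the third quantity.

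For the lower bounds, the term $\sigma$ is recovered from $\E\max_i(\cdot)\ge\max_i\E(\cdot)$ together with the elementary bound $\E\sqrt{\sum_j b_{ij}^2 g_j^2}\gtrsim\sqrt{\sum_j b_{ij}^2}$, which I would prove by separating the case where one coordinate dominates the row from the case where it does not. The maximum-entry term is immediate for the first quantity, since $\max_i\|X_i\|_2\ge\max_{ij}|X_{ij}|$. For the second quantity the shared Gaussians appear to obstruct this, but symmetry rescues the bound: $\max_i W_i\ge\max_{ij}|b_{ij}g_j|=\max_j(\max_i b_{ij})|g_j|=\max_j\rho_j|g_j|$, where $b_{ij}=b_{ji}$ is used to identify column maxima with row maxima, and the right-hand side is a maximum of $d$ \emph{independent} scaled Gaussians, hence $\asymp\max_i\rho_{(i)}\sqrt{\log i}$.

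The one genuinely delicate point, which I expect to be the main obstacle, is matching $\E\max_{ij}|X_{ij}|$ with $\max_i\rho_{(i)}\sqrt{\log i}$. The rearrangement estimate bounds $\E\max_{ij}|X_{ij}|$ in terms of the decreasing rearrangement $c_1\ge c_2\ge\cdots$ of \emph{all} entries, giving $\lesssim\sup_k c_k\sqrt{\log k}$, which is a priori phrased in terms of entry counts rather than row counts; the two can differ by as much as a factor of $d$. This discrepancy is absorbed by the logarithm through a symmetry argument: the $k$ largest entries, viewed as edges of a graph, touch at least $m\ge\sqrt{k}$ rows (since $k$ edges require $k\lesssim m^2$), each of which has row maximum $\ge c_k$, so that $\rho_{(m)}\ge c_k$ and thus $c_k\sqrt{\log k}\lesssim\rho_{(m)}\sqrt{\log m}$. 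Conversely, among the $i$ rows of largest maximum one extracts at least $i/2$ distinct entries of size $\ge\rho_{(i)}$ (each such entry is the maximizer of at most two rows, again by symmetry), yielding that many independent Gaussians at scale $\rho_{(i)}$ and hence the matching lower bound $\E\max_{ij}|X_{ij}|\gtrsim\rho_{(i)}\sqrt{\log i}$. The interplay between the matrix symmetry and the slow growth of the logarithm is the heart of the matter.
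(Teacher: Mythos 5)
Your proposal is correct, and its skeleton is the same as the paper's: upper bounds by splitting $\E\max_i(\cdot)\le\max_i\E(\cdot)+\E\max_i[(\cdot)-\E(\cdot)]$, Gaussian concentration of the $\rho_i$-Lipschitz row norms, and the union-bound rearrangement estimate that needs no independence (the paper's Lemma~\ref{lem:supg1}); lower bounds from $\E\max_i(\cdot)\ge\max_i\E(\cdot)$ for the $\sigma$-term, the reverse Gaussian maximum bound (Lemma~\ref{lem:supg2}) for the entry term, the identity $\max_{ij}b_{ij}|g_j|=\max_j\rho_j|g_j|$ exploiting $b_{ij}=b_{ji}$ for the shared-Gaussian quantity, and averaging the two lower bounds at the end. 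Three local deviations are worth recording. First, for $\E\big[\sum_j b_{ij}^2g_j^2\big]^{1/2}\gtrsim\big[\sum_j b_{ij}^2\big]^{1/2}$ the paper argues via the Gaussian Poincar\'e inequality ($\mathrm{Var}\le\max_j b_{ij}^2$, which is itself $\lesssim$ the squared mean); your case split on whether one coordinate dominates the row is an elementary and equally valid substitute. Second, your ``delicate point'' --- the direct two-sided comparison of $\E\max_{ij}|X_{ij}|$ with $\max_i\rho_{(i)}\sqrt{\log i}$ via edge counting ($k$ entries touch $m\gtrsim\sqrt{k}$ rows, so $c_k\sqrt{\log k}\lesssim\rho_{(m)}\sqrt{\log m}$) --- is more than the paper needs in the upper direction: the paper gets $\E\max_{ij}|X_{ij}|\lesssim$ the fourth quantity for free from the trivial pointwise bound $\max_{ij}|X_{ij}|\le\max_i\big[\sum_j X_{ij}^2\big]^{1/2}$ and the already-proved upper bound on the first quantity, so your combinatorial lemma is superfluous there (though correct, and it yields an intrinsic statement about $\E\max_{ij}|X_{ij}|$ by itself). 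Third, in the lower direction your observation that each symmetric entry can be the per-row maximizer of at most two rows, so that the top $i$ rows yield at least $\lceil i/2\rceil$ \emph{distinct} independent Gaussians at scale $\ge\rho_{(i)}$, is actually more careful than the paper: the paper applies Lemma~\ref{lem:supg2} directly to $\{X_{i,j(i)}\}_i$, silently passing over the fact that $(i,j(i))$ and $(i',j(i'))$ may name the same entry when $j(i)=i'$ and $j(i')=i$; your halving argument is precisely the patch that legitimizes that appeal, at the harmless cost of $\log i \mapsto \log\lceil i/2\rceil$.
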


\begin{rem}
\label{rem:main1sharp}
The proof of the upper bound in Theorem \ref{thm:main1ext} in fact yields
$$
	\E\Bigg[\max_i\sqrt{\sum_j b_{ij}^2g_j^2}\Bigg] \le
	\max_i\sqrt{\sum_j b_{ij}^2} + 
	C\max_{ij}b_{ij}^*\sqrt{\log(i+1)}
$$
for a universal constant $C$ (that is, the constant in front of the 
leading term is one).  This is used in section 
\ref{sec:geom} to prove Theorem \ref{thm:main3} with an optimal constant.
\end{rem}

The proof of Theorem \ref{thm:main1ext} is based on elementary estimates 
for the maxima of (sub-)Gaussian random variables with inhomogenenous 
variances.

\subsection{Gaussian maxima}

We begin by recalling a standard upper bound on the maximum of sub-Gaussian 
random variables, cf.\ \cite[Proposition 2.4.16]{Tal14}.

\begin{lem}
\label{lem:supg1}
Let $X_1,\ldots,X_n$ be not necessarily independent random variables
with
$$
        \P[X_i>x] \le C e^{-x^2/C\sigma_i^2}
        \quad\mbox{for all }x\ge 0,~ i,
$$
where $C$ is a universal constant and $\sigma_i\ge 0$ are given. Then
$$
        \E\bigg[\max_{i\le n}X_i\bigg] \lesssim
        \max_{i\le n} \sigma_i^*\sqrt{\log(i+1)},
$$
where $\sigma_1^*\ge \sigma_2^*\ge\cdots\ge\sigma_n^*$ is the decreasing
rearrangement of $\sigma_1,\ldots,\sigma_n$.
\end{lem}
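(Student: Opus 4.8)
The plan is to bound the expected maximum by integrating the tail probability and then to convert the variance-weighted Gaussian tails into a summable polynomial tail by exploiting the decreasing rearrangement. Since the maximum is invariant under permutations of the indices, I may assume without loss of generality that $\sigma_1\ge\sigma_2\ge\cdots\ge\sigma_n$, so that $\sigma_i=\sigma_i^*$. Write $M:=\max_{i\le n}\sigma_i\sqrt{\log(i+1)}$ for the quantity we wish to dominate; by definition of $M$ we have $\sigma_i^2\log(i+1)\le M^2$ for every $i$.

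First I would pass to the tail integral. Using $\max_i X_i\le(\max_i X_i)_+$ together with the union bound and the hypothesis,
\[
	\E\Big[\max_{i\le n}X_i\Big]
	\le\int_0^\infty\P\Big[\max_{i\le n}X_i>t\Big]\,dt
	\le\int_0^\infty\min\bigg(1,\,C\sum_{i=1}^n e^{-t^2/C\sigma_i^2}\bigg)\,dt.
\]
The key step is to replace the heterogeneous variances by the index-dependent bound coming from the rearrangement: since $\sigma_i^2\le M^2/\log(i+1)$, we have $e^{-t^2/C\sigma_i^2}\le e^{-t^2\log(i+1)/CM^2}=(i+1)^{-t^2/CM^2}$, so the integrand is controlled by a sum of powers of $(i+1)$. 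After the substitution $t=Mu$ the bound becomes $M\int_0^\infty\min\!\big(1,\,C\sum_i(i+1)^{-u^2/C}\big)\,du$, and it remains only to show that this last integral is bounded by a universal constant. I would split the integral at a threshold $u_0\asymp\sqrt{C}$: on $[0,u_0]$ the integrand is at most $1$ and contributes at most $u_0$, while on $[u_0,\infty)$ the exponent $u^2/C$ is at least $2$, so $\sum_i(i+1)^{-u^2/C}$ is a convergent $p$-series, and the elementary Gaussian tail estimate $\int_{u_0}^\infty e^{-\alpha u^2}\,du\le(2\alpha u_0)^{-1}e^{-\alpha u_0^2}$ with $\alpha=\log(i+1)/C$ renders the resulting sum over $i$ summable as well. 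This yields $\E[\max_i X_i]\lesssim M$, as desired.

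The only genuine idea in this argument is the conversion in the key step: the decreasing rearrangement is exactly what turns the variance-weighted exponential $e^{-t^2/C\sigma_i^2}$ into the polynomial tail $(i+1)^{-t^2/CM^2}$, whose summability over $i$ for $t$ above the natural scale $M$ is what drives the whole bound. The main bookkeeping obstacle is matching the two regimes at the threshold $u_0$ and tracking the universal constants so that the truncation at $1$ and the polynomial-tail estimate fit together cleanly; once the key substitution is in place, however, this is entirely routine.
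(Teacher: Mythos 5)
Your proof is correct: the reduction $\E[\max_i X_i]\le\int_0^\infty\min\big(1,C\sum_i e^{-t^2/C\sigma_i^2}\big)\,dt$, the conversion $e^{-t^2/C\sigma_i^2}\le(i+1)^{-t^2/CM^2}$ via the rearrangement, and the split at $u_0\asymp\sqrt{C}$ with the Gaussian tail estimate all go through and yield a universal constant. The paper itself offers no proof of this lemma, citing it as standard from \cite[Proposition 2.4.16]{Tal14}, and your union-bound plus tail-integration argument is essentially the standard proof that citation refers to.
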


The essential tool in the proof of Theorem \ref{thm:main1ext} is that the 
result of Lemma \ref{lem:supg1} can be reversed when the random variables 
are independent and Gaussian.  

\begin{lem}
\label{lem:supg2}
Let $X_1,\ldots,X_n$ be independent with
$X_i\sim N(0,\sigma_i^2)$.  Then
$$
        \E\bigg[\max_{i\le n}|X_i|\bigg] \gtrsim
        \max_{i\le n} \sigma_i^*\sqrt{\log(i+1)},
$$
where $\sigma_1^*\ge \sigma_2^*\ge\cdots\ge\sigma_n^*$ is the decreasing
rearrangement of $\sigma_1,\ldots,\sigma_n$.
\end{lem}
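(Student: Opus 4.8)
The plan is to establish the pointwise estimate $\E[\max_{i\le n}|X_i|]\gtrsim \sigma_k^*\sqrt{\log(k+1)}$ for each fixed $k\le n$; taking the maximum over $k\le n$ on the right-hand side then yields the lemma. The guiding observation is that the maximum over all $n$ variables dominates the maximum over the $k$ variables having the largest variances, and that \emph{every} one of those $k$ variables has standard deviation at least $\sigma_k^*$. This lets us reduce an inhomogeneous problem to a homogeneous one at the cost of only a constant.

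Concretely, I would first pass to the subcollection consisting of the $k$ variables with the largest variances, which after relabeling I write as $X_i' = \sigma_i^* g_i$ for $i\le k$, where $g_1,\dots,g_k$ are i.i.d.\ $N(0,1)$. Since $\max_{i\le n}|X_i|$ dominates the maximum over this subcollection and $\sigma_i^*\ge\sigma_k^*$ for every $i\le k$, monotonicity gives the pathwise bound $\max_{i\le n}|X_i| \ge \max_{i\le k}\sigma_i^*|g_i| \ge \sigma_k^*\max_{i\le k}|g_i|$. Taking expectations reduces the claim to the homogeneous lower bound $\E[\max_{i\le k}|g_i|]\gtrsim\sqrt{\log(k+1)}$ for independent standard Gaussians.

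The only substantive step is this classical estimate on the expected maximum of $k$ i.i.d.\ standard absolute Gaussians. I would choose the threshold $t_k$ defined by $\P[|g|>t_k]=1/k$; the standard Gaussian tail bound $\P[g>t]\ge c\,t(1+t^2)^{-1}e^{-t^2/2}$ shows $t_k\asymp\sqrt{\log(k+1)}$, while independence gives $\P[\max_{i\le k}|g_i|>t_k]=1-(1-1/k)^k\ge 1-e^{-1}$. Hence $\E[\max_{i\le k}|g_i|]\ge t_k\,\P[\max_{i\le k}|g_i|>t_k]\gtrsim\sqrt{\log(k+1)}$, with the trivial case $k=1$ handled directly by $\E|g|=\sqrt{2/\pi}$.

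I do not anticipate a genuine obstacle: the reduction via $\sigma_i^*\ge\sigma_k^*$ is elementary and is the real content of the argument, while the i.i.d.\ Gaussian maximum bound is a textbook computation that could alternatively be invoked as Sudakov minoration. The one point requiring mild care is ensuring that the constants in the tail estimate and in the relation $t_k\asymp\sqrt{\log(k+1)}$ are uniform in $k$, so that the final universal constant correctly absorbs the small-$k$ regime.
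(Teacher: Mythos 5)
Your proposal is correct and follows essentially the same route as the paper: after ordering by variance, both arguments use the pathwise bound $\max_{i\le n}|X_i|\ge \sigma_k^*\max_{i\le k}|g_i|$ over the top $k$ variances and then invoke the classical estimate $\E[\max_{i\le k}|g_i|]\gtrsim\sqrt{\log(k+1)}$ for i.i.d.\ standard Gaussians, finally maximizing over $k$. The only difference is cosmetic: the paper cites this classical estimate (Talagrand, Exercise 2.2.7), whereas you prove it directly via the quantile $t_k$ with $\P[|g|>t_k]=1/k$, which is a standard and correct computation provided you carry out the uniformity-in-$k$ check you already flagged.
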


\begin{proof}
By permutation invariance, we can assume that 
$\sigma_i$ are nonincreasing in $i$ (so that $\sigma_i=\sigma_i^*$).
Fix $j\ge 1$ and let $g_i = X_i/\sigma_i$.  Then
$|X_i|\ge \sigma_j|g_i|$ for all $i\le j$ and $g_1,\ldots,g_j$ are 
i.i.d.\ 
standard Gaussian variables.  We therefore have
$$
	\E\bigg[\max_{i\le j}|X_i|\bigg] \ge
	\sigma_j\,\E\bigg[\max_{i\le j}|g_i|\bigg] \gtrsim
	\sigma_j\sqrt{\log(j+1)},
$$
where we used that the maximum of $j$ i.i.d.\ standard Gaussian 
variables is of order $\sqrt{\log(j+1)}$ 
\cite[Exercise 2.2.7]{Tal14}.  It remains to take the maximum over 
$j\le n$.
\end{proof}

\subsection{Proof of Theorem \ref{thm:main1ext}}

Let us begin by writing
\begin{align*}
        \E\bigg[\max_i\sqrt{\sum_j X_{ij}^2}\bigg] &\le
        \max_i\E\bigg[\sqrt{\sum_j X_{ij}^2}\bigg] \\
        &\qquad\mbox{}+ 
        \E\bigg[\max_i\bigg\{
        \sqrt{\sum_j X_{ij}^2}-\E\bigg[
        \sqrt{\sum_j X_{ij}^2}\bigg]
        \bigg\}\bigg].
\end{align*}
By Jensen's inequality, we have
$$
        \max_i\E\bigg[\sqrt{\sum_j X_{ij}^2}\bigg] \le 
	\max_i \sqrt{\sum_j b_{ij}^2}.
$$
On the other hand, by Gaussian concentration \cite[Theorem 5.8]{BLM13},
we have
$$
        \P\bigg[
        \sqrt{\sum_j X_{ij}^2}-\E\bigg[
        \sqrt{\sum_j X_{ij}^2}\bigg] > t
        \bigg]
        \le e^{-t^2/2\max_j b_{ij}^2}
$$
for every $i\le n$ and $t\ge 0$.  We therefore obtain
$$
        \E\bigg[\max_i\sqrt{\sum_j X_{ij}^2}\bigg] \le
	\max_i\sqrt{\sum_j b_{ij}^2}
        + C\max_{ij}b_{ij}^*\sqrt{\log(i+1)}
$$
by Lemma \ref{lem:supg1}, where $C$ is a universal constant.
As
$$
	\max_{ij}b_{ij}^*\sqrt{\log(i+1)} \lesssim
	\max_{j}b_{1j}^* +
	\max_{ij}b_{ij}^*\sqrt{\log i} \lesssim
	\max_i\sqrt{\sum_j b_{ij}^2} + \max_{ij}b_{ij}^*\sqrt{\log i},
$$
we have shown
$$
        \E\bigg[\max_i\sqrt{\sum_j X_{ij}^2}\bigg] \lesssim
	\max_i\sqrt{\sum_j b_{ij}^2}
        + \max_{ij}b_{ij}^*\sqrt{\log i}
$$
(this last step is irrelevant to our results and is included for 
cosmetic reasons only).

Next, we note that
\begin{align*}
	\sum_j b_{ij}^2 &=
	\E\bigg[\sqrt{\sum_j X_{ij}^2}\bigg]^2 +
	\mathrm{Var}\bigg[\sqrt{\sum_j X_{ij}^2}\bigg] \\
	&\le \E\bigg[\sqrt{\sum_j X_{ij}^2}\bigg]^2 +
	\max_j b_{ij}^2
	\lesssim \E\bigg[\sqrt{\sum_j X_{ij}^2}\bigg]^2,
\end{align*}
where we have used the Gaussian Poincar\'e inequality \cite[Theorem 
3.20]{BLM13}.  Therefore,
$$
	\E\bigg[\max_i\sqrt{\sum_j X_{ij}^2}\bigg] \gtrsim
	\max_i\sqrt{\sum_j b_{ij}^2} = \|\E X^2\|^{1/2}.
$$
On the other hand, we trivially have
$$
	\E\bigg[\max_i\sqrt{\sum_j X_{ij}^2}\bigg] \ge
	\E\max_{ij}|X_{ij}|.
$$
Averaging these bounds gives
$$
	\|\E X^2\|^{1/2} + \E\max_{ij}|X_{ij}| 
	\lesssim
	\E\bigg[\max_i\sqrt{\sum_j X_{ij}^2}\bigg].
$$
In the opposite direction, for every $i$, choose $j(i)$ such
that $b_{ij(i)} = \max_j b_{ij}$.  Then
$$
	\E\max_{ij}|X_{ij}| \ge
	\E\max_i|X_{ij(i)}| \gtrsim
	\max_{ij}b_{ij}^*\sqrt{\log i}
$$
by Lemma \ref{lem:supg2}.
Putting together the above bounds, we have shown that
\begin{align*}
	\max_i\sqrt{\sum_j b_{ij}^2} +
	\max_{ij}b_{ij}^*\sqrt{\log i} &\lesssim
	\|\E X^2\|^{1/2} + \E\max_{ij}|X_{ij}|
	\\
	&\lesssim
        \E\bigg[\max_i\sqrt{\sum_j X_{ij}^2}\bigg] \\
	&\lesssim
	        \max_i\sqrt{\sum_j b_{ij}^2} +
        \max_{ij}b_{ij}^*\sqrt{\log i}.
\end{align*}
This establishes the equivalence between Conjectures~\ref{conj:1} and
\ref{conj:2}.

It remains to consider the second quantity in Theorem \ref{thm:main1ext}.
The upper bound
\begin{align*}
        \E\bigg[\max_i\sqrt{\sum_j b_{ij}^2g_j^2}\bigg] &\le
        \max_i\sqrt{\sum_j b_{ij}^2}
        + C\max_{ij}b_{ij}^*\sqrt{\log(i+1)} \\
	&\lesssim
	\max_i\sqrt{\sum_j b_{ij}^2} + \max_{ij}b_{ij}^*\sqrt{\log i}
\end{align*}
and the lower bound
$$
	\E\bigg[\max_i\sqrt{\sum_j b_{ij}^2g_j^2}\bigg]
	\gtrsim \max_i\sqrt{\sum_j b_{ij}^2}
$$
are obtained by repeating \emph{verbatim} the corresponding arguments for 
the first quantity in Theorem \ref{thm:main1ext}.  On the other hand, we 
can now estimate
$$
	\E\bigg[\max_i\sqrt{\sum_j b_{ij}^2g_j^2}\bigg] \ge
	\E\max_{ij}b_{ij}|g_j| \gtrsim
	\max_{ij}b_{ij}^*\sqrt{\log i}
$$
by Lemma \ref{lem:supg2}.  Averaging these bounds
completes the proof. \qed

\section{Slicing}
\label{sec:slice}

The aim of this short section is to prove Theorem \ref{thm:main2}.  The 
lower bound is trivial, and therefore by Theorem \ref{thm:main1} it 
remains to prove the following.

\begin{thm}
\label{thm:main2ext}
The expected spectral norm satisfies
$$
	\E\|X\| \lesssim
	\sqrt{\log\log d}\,
	\Bigg(
        \max_i\sqrt{\sum_j b_{ij}^2} +
        \max_{ij}b_{ij}^*\sqrt{\log i}
	\Bigg).
$$
\end{thm}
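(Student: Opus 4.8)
The plan is to reduce to the explicit quantity supplied by Theorem~\ref{thm:main1} and then to decompose $X$ according to the magnitude of its entries, applying Theorem~\ref{thm:bvh} on each scale. Throughout, write $R:=\max_i\sqrt{\sum_j b_{ij}^2}$ and $B:=\max_{ij}b_{ij}^*\sqrt{\log i}$ for the two terms on the right-hand side of Theorem~\ref{thm:main2ext}. Since the spectral norm is invariant under simultaneous permutation of rows and columns, I may assume the entries are already arranged so that $b_{ij}=b_{ij}^*$; set $\beta_i:=\max_j b_{ij}$, so that $\beta_1\ge\beta_2\ge\cdots$, $\beta_1\le R$, and $\beta_i\sqrt{\log i}\le B$ for every $i$.

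First I would slice the matrix by entry magnitude into three pieces, combined by the triangle inequality. Entries exceeding $B$ force $\beta_i>B$, hence $\log i<1$, so they are confined to an $O(1)\times O(1)$ corner whose spectral norm is $\lesssim R$. All entries below $B/\sqrt{\log d}$ may be collected into a single matrix to which Theorem~\ref{thm:bvh} applies with second term $\le (B/\sqrt{\log d})\sqrt{\log d}=B$, giving a contribution $\lesssim R+B$. The remaining entries lie in the dyadic window $(B/\sqrt{\log d},B]$, which I split into bands $Y_k$ collecting the entries of magnitude $\approx 2^{-k}B$, for $k=0,1,\dots,N-1$ with $N\asymp\log\log d$. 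The crucial computation is the effective dimension: an entry of magnitude $\gtrsim 2^{-k}B$ in row $i$ forces $\beta_i\gtrsim 2^{-k}B$, and since $\beta_i\sqrt{\log i}\le B$ this yields $\sqrt{\log i}\lesssim 2^k$, i.e.\ $\log i\lesssim 4^k$. Hence $Y_k$ is supported on a corner of dimension $d_k\le e^{O(4^k)}$, so that Theorem~\ref{thm:bvh} gives $\E\|Y_k\|\lesssim R+2^{-k}B\sqrt{\log d_k}\lesssim R+B$: every band contributes at the same optimal order.

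Writing $W=\sum_k Y_k$, it remains to combine the $N\asymp\log\log d$ bands, and this is where the argument must be clever. The triangle inequality gives only $\E\|W\|\le\sum_k\E\|Y_k\|\lesssim(\log\log d)(R+B)$, which loses a full power of $\log\log d$; the whole point is to recover the square root. I would do this through the second moment. The $Y_k$ are independent and mean zero with $\E\|Y_k\|^2\lesssim(R+B)^2$ (combining the displayed bound with Gaussian concentration of $\|Y_k\|$, whose fluctuations are controlled by the maximal band entry $\le B$), so it suffices to prove
\[
\E\|W\|^2\ \lesssim\ \sum_k\E\|Y_k\|^2\ \lesssim\ N\,(R+B)^2,
\]
since then $\E\|W\|\le(\E\|W\|^2)^{1/2}\lesssim\sqrt N\,(R+B)=\sqrt{\log\log d}\,(R+B)$, as required. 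Expanding $W^2=\sum_k Y_k^2+\sum_{k\ne l}Y_kY_l$, the diagonal part is harmless: the summands $Y_k^2$ are positive semidefinite, so $\|\sum_k Y_k^2\|\le\sum_k\|Y_k\|^2$ and hence $\E\|\sum_k Y_k^2\|\le\sum_k\E\|Y_k\|^2$.

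The main obstacle is the off-diagonal Gaussian chaos $\sum_{k\ne l}Y_kY_l$, which carries exactly the cancellation among the independent bands that is responsible for the square-root gain. A crude bound via decoupling only reproduces a constant multiple of $(\E\|W\|)^2$, which fails to close the self-bounding recursion $\E\|W\|^2\lesssim\sum_k\E\|Y_k\|^2+(\E\|W\|)^2$ because of an unfavorable universal constant. The delicate point is therefore to control this chaos term well enough to show that the cross terms do not destroy the orthogonality of the scales — equivalently, that the nested bands behave like independent copies of a single matrix (for which the norm of the sum grows like $\sqrt N$) rather than like resolutions of the sphere at distinct scales (for which it could grow like $N$). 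I expect this estimate for the off-diagonal chaos, and not the slicing itself, to be the crux of the proof.
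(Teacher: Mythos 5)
Your slicing and per-band arithmetic are fine---your bands are in fact a reparametrization of the scales the paper uses (entries of size $\approx 2^{-k}B$ confined to a corner of dimension $e^{O(4^k)}$ is the same bookkeeping as entries $\lesssim 2^{-n/2}\Gamma$ on dimension $2^{2^n}$)---but the proof does not close, and you say so yourself: the assembly step $\E\|W\|^2\lesssim\sum_k\E\|Y_k\|^2$ is left as the ``crux,'' and your proposed route through decoupling admittedly fails. This is a genuine gap, not a routine verification. The inequality is not a general fact about independent mean-zero Gaussian matrices (symmetrization plus noncommutative Khintchine loses a factor $\sqrt{\log d}$, and the plain triangle inequality loses the factor $N$ you are trying to save), and your bands have \emph{nested} supports, so the off-diagonal chaos $\sum_{k\ne l}Y_kY_l$ is genuinely present. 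Moreover, the cancellation you are hoping for is essentially the phenomenon the paper's closing discussion identifies as the open obstruction: exploiting cancellation among the slices (to get $\max_n\|X^{(n)}\|^2$ rather than $\sum_n\|X^{(n)}\|^2$) is exactly what would be needed to remove the $\sqrt{\log\log d}$ altogether, and the paper states this appears to require a genuinely new idea. So your plan routes the proof through something harder than the theorem itself.

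The missing idea is that \emph{no cancellation is needed} if one slices so that the pieces occupy disjoint rows. The paper first reduces to the lower-triangular part $X^\downarrow$ (so that each slice's column support is automatically bounded by its row support), sorts rows so that $b_{ij}=b_{ij}^*$, and then cuts by \emph{row index} at the doubly exponential thresholds $2^{2^n}$. Since distinct slices live on disjoint rows, $X^{(n)*}X^{(m)}=0$ for $n\ne m$ identically, whence
$$
	\|X^\downarrow\|^2=\Bigg\|\sum_{n=1}^N X^{(n)*}X^{(n)}\Bigg\|
	\le\sum_{n=1}^N\|X^{(n)}\|^2
$$
holds \emph{deterministically}: your chaos term vanishes by construction, and the $\sqrt{N}$ with $N\asymp\log\log d$ then comes from plain Cauchy--Schwarz, with no orthogonality estimate to prove. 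The magnitude control you engineered by hand comes for free from the row position: after sorting, every entry in a row $i>2^{2^{n-1}}$ satisfies $b_{ij}\le\Gamma/\sqrt{\log i}\lesssim 2^{-n/2}\Gamma$, and each (rectangular, hence Theorem~\ref{thm:bvhrect}) slice is bounded exactly as in your per-band estimate. In short: partition rows, not entry magnitudes; that single change makes $W^*W$ split exactly and eliminates the step you could not supply.
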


This result will be established by slicing the matrix into $\sim\log\log d$
pieces at different scales, each of which is bounded separately using
Theorem \ref{thm:bvh}.

It proves to be convenient for the present purposes to work with matrices 
with independent entries that are not symmetric (as opposed to symmetric 
matrices, for which $X_{ij}=X_{ji}$ are not independent).  To this end, 
let us cite the following non-symmetric variant of Theorem \ref{thm:bvh}, 
see \cite[Theorem 3.1]{BvH15} and its proof.

\begin{thm}[\cite{BvH15}]
\label{thm:bvhrect}
Let $Z$ be the $d_1\times d_2$ matrix whose entries $Z_{ij}\sim 
N(0,c_{ij}^2)$ are independent Gaussian variables.  Then the expected
spectral norm satisfies
$$
	\E[\|Z\|^2]^{1/2} \lesssim	
	\max_i\sqrt{\sum_j c_{ij}^2} +
	\max_j\sqrt{\sum_i c_{ij}^2} +
	\max_{ij}c_{ij}\sqrt{\log(d_1\wedge d_2)}.
$$
\end{thm}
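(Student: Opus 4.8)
The plan is to prove the bound by the moment method applied to the \emph{smaller} of the two Gram matrices $ZZ^\top$ and $Z^\top Z$, which is precisely what produces the dimensional factor $\sqrt{\log(d_1\wedge d_2)}$ rather than the larger $\sqrt{\log(d_1+d_2)}$. Since $q\mapsto\E[\|Z\|^q]^{1/q}$ is nondecreasing, for any integer $p\ge 1$ we have $\E[\|Z\|^2]^{1/2}\le\E[\|Z\|^{2p}]^{1/2p}$, so it suffices to control a single high even moment. Assuming without loss of generality (transposing $Z$ if necessary) that $d_1\le d_2$, write $n=d_1\wedge d_2=d_1$ and use $\|Z\|^{2p}=\|ZZ^\top\|^p$ together with the elementary sandwich
\begin{equation*}
        \|ZZ^\top\|^p \le \mathrm{Tr}[(ZZ^\top)^p] \le n\,\|ZZ^\top\|^p,
\end{equation*}
valid because $ZZ^\top$ is an $n\times n$ positive semidefinite matrix. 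Taking expectations and $2p$-th roots gives $\E[\|Z\|^{2p}]^{1/2p}\le\E[\mathrm{Tr}[(ZZ^\top)^p]]^{1/2p}\le n^{1/2p}\E[\|Z\|^{2p}]^{1/2p}$, so the choice $p\sim\log n$ makes $n^{1/2p}=O(1)$ and renders the trace an equivalent proxy for the moment. It is essential that the trace is formed from the \emph{smaller} factor $ZZ^\top$: applying the symmetric bound of Theorem~\ref{thm:bvh} to the symmetric dilation $\left(\begin{smallmatrix}0 & Z\\ Z^\top & 0\end{smallmatrix}\right)$ would instead only deliver $\sqrt{\log(d_1+d_2)}$, which is genuinely lossy (e.g.\ when $d_1=1$ the true norm carries no logarithmic factor at all).

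It then remains to estimate $\E[\mathrm{Tr}[(ZZ^\top)^p]]$. Expanding the product of matrix entries, this trace is a sum over closed alternating walks of length $2p$ between the row index set $[d_1]$ and the column index set $[d_2]$:
\begin{equation*}
        \mathrm{Tr}[(ZZ^\top)^p]
        = \sum_{i_1,\dots,i_p\in[d_1]}\ \sum_{j_1,\dots,j_p\in[d_2]}\
        \prod_{a=1}^{p} Z_{i_a j_a}\,Z_{i_{a+1} j_a},
\end{equation*}
with the convention $i_{p+1}=i_1$. Since the entries are independent mean-zero Gaussians, taking the expectation annihilates every term in which some edge $(i,j)$ is traversed an odd number of times, and the Wick (Isserlis) formula evaluates the surviving terms as sums of products of the variances $c_{ij}^2$ over pairings of the $2p$ edges. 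The problem thus reduces to a purely combinatorial count of weighted closed walks, organized according to the coincidence pattern of their edges.

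The heart of the argument, and the step I expect to be the main obstacle, is the graph-theoretic bookkeeping that extracts the three terms from this weighted walk count; this is exactly the content of \cite[Theorem 3.1]{BvH15}. The mechanism is that summing a variance weight over a row vertex produces a row-variance sum $\sum_j c_{ij}^2$, summing over a column vertex produces a column-variance sum $\sum_i c_{ij}^2$, and each edge that must be re-used (because the walk has fewer distinct edges than steps) can only be absorbed by the uniform bound $\max_{ij}c_{ij}^2$, at a combinatorial cost growing polynomially in $p$. Carefully tracking these contributions along the two sides of the bipartite walk yields an estimate of the form
\begin{equation*}
        \E\big[\mathrm{Tr}[(ZZ^\top)^p]\big]^{1/2p}
        \lesssim \max_i\sqrt{\sum_j c_{ij}^2}
        + \max_j\sqrt{\sum_i c_{ij}^2}
        + \sqrt{p}\,\max_{ij}c_{ij}.
\end{equation*}
Substituting $p\sim\log n=\log(d_1\wedge d_2)$ and combining with the sandwich of the first step gives
\begin{equation*}
        \E[\|Z\|^2]^{1/2}\le\E[\|Z\|^{2p}]^{1/2p}
        \lesssim \max_i\sqrt{\sum_j c_{ij}^2}
        + \max_j\sqrt{\sum_i c_{ij}^2}
        + \max_{ij}c_{ij}\sqrt{\log(d_1\wedge d_2)},
\end{equation*}
which is the claimed bound.
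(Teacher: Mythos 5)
Your proposal is correct and takes essentially the same route as the paper, which does not reprove this theorem but imports it by citing \cite[Theorem 3.1]{BvH15} ``and its proof'': your reduction---monotonicity of moments, the trace sandwich for the \emph{smaller} Gram matrix $ZZ^\top$ (which is indeed the source of $\log(d_1\wedge d_2)$ rather than the lossy $\log(d_1+d_2)$ from the symmetric dilation), the Wick expansion over bipartite closed walks, and the combinatorial count outsourced to \cite{BvH15}---is exactly the argument that citation encodes, including the observation that the moment method bounds $\E[\|Z\|^{2p}]^{1/2p}$ and hence the second moment, which is why the paper cites the proof and not just the statement. One cosmetic caveat: your displayed estimate for $\E[\mathrm{Tr}[(ZZ^\top)^p]]^{1/2p}$ should carry the factor $n^{1/2p}$ (or be asserted only for $p\gtrsim\log n$, where that factor is $O(1)$; for $p=1$ the trace is $\sum_{ij}c_{ij}^2$, which can far exceed the right-hand side), but since you only invoke it at $p\sim\log(d_1\wedge d_2)$ this does not affect the argument.
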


We can now proceed to the proof of Theorem \ref{thm:main2ext}.

\begin{proof}[Proof of Theorem \ref{thm:main2ext}]
By permuting the rows and columns of $X$ if necessary, we can assume 
without loss of generality in the sequel that $b_{ij}=b_{ij}^*$.

We begin by decomposing the matrix $X=X^\uparrow + X^\downarrow$ into its 
parts above and below the diagonal: that is, $X^\uparrow_{ij} :=
X_{ij}\mathbf{1}_{i<j}$ and $X^\downarrow := X_{ij}\mathbf{1}_{i\ge j}$.
As
$$
	\E\|X\| \le \E\|X^\uparrow\|+\E\|X^\downarrow\| \le
	2\,\E\|X^\downarrow\|
$$
(the second bound follows by Jensen's inequality), it suffices
to bound $\E\|X^\downarrow\|$.

We now decompose $X^\downarrow$ into $N:=\lceil\log_2\log_2 d\rceil$
horizontal slices as follows:
$$
	X^\downarrow = \sum_{n=1}^N X^{(n)}
$$
with
$$
	X^{(1)}_{ij} := X^\downarrow_{ij}\mathbf{1}_{i\le 4},\qquad
	X^{(n)}_{ij} := X^\downarrow_{ij}\mathbf{1}_{2^{2^{n-1}}<i\le
	2^{2^n}}\mbox{ for }2\le n\le N.
$$
Each matrix $X^{(n)}$ has independent entries, and the only nonzero 
entries of this matrix are contained in its upper $2^{2^n}\times 2^{2^n}$ 
block.  Moreover,
$$
	\|X^\downarrow\|^2 =
	\|X^{\downarrow *}X^\downarrow\|=
	\Bigg\|\sum_{n=1}^NX^{(n)*}X^{(n)}\Bigg\| \le
	\sum_{n=1}^N \|X^{(n)}\|^2.
$$
We therefore have
$$
	\E\|X\| \le
	2\sqrt{N}
	\max_{n\le N}\E[\|X^{(n)}\|^2]^{1/2}.
$$
We now apply Theorem \ref{thm:bvhrect} to estimate each term
$\E[\|X^{(n)}\|^2]$.  Define the quantities
$$
	\sigma := \max_i\sqrt{\sum_j b_{ij}^2},\qquad\quad
	\Gamma := \max_{ij}b_{ij}^*\sqrt{\log i}.
$$
As we assumed that $b_{ij}=b_{ij}^*$, it follows immediately that
$$
	b_{ij} \le \frac{\Gamma}{\sqrt{\log i}}
	\quad\mbox{for all }i,j.
$$
In particular, this implies that for $n\ge 2$
$$
	\mathrm{Var}(X^{(n)}_{ij}) 
	\le \frac{\Gamma^2}{\log 2^{2^{n-1}}}
	\lesssim
	2^{-n}\Gamma^2\quad\mbox{for all }i,j.
$$
On the other hand, the sum of the variances of the entries in any row or 
column of $X^{(n)}$ is clearly still bounded by $\sigma^2$.  Finally, as 
noted above, $X^{(n)}$ is a ${2^{2^n}\times 2^{2^n}}$-dimensional matrix
(we can remove all vanishing rows and columns without decreasing the 
norm).  Applying Theorem \ref{thm:bvhrect} yields for every $2\le n\le N$
$$
	\E[\|X^{(n)}\|^2]^{1/2} \lesssim
	\sigma + 2^{-n/2}\Gamma\sqrt{\log 2^{2^n}} \lesssim
	\sigma + \Gamma.
$$
On the other hand, applying Theorem \ref{thm:bvhrect} with
$d_1=d_2=4$ immediately yields the analogous bound for
$X^{(1)}$.  We therefore finally obtain
$$
        \E\|X\| \lesssim 
        \sqrt{N}(\sigma+\Gamma),
$$
which completes the proof.
\end{proof}

The proof of Theorem \ref{thm:main2ext} does not really contain a new 
idea: it follows directly from the dimension-dependent bound of Theorem 
\ref{thm:bvhrect} by applying it in a multiscale fashion.  The problem 
with this approach is that while we engineered the slices so that 
Theorem \ref{thm:bvhrect} is sharp on each slice, substantial loss 
is incurred in the estimate
$$
	\|X^\downarrow\|^2 \le
	\sum_{n=1}^N \|X^{(n)}\|^2,
$$
that is, when we assemble the slices to obtain the final bound. To 
illustrate this loss, consider the case where $X$ is a diagonal matrix 
with $b_{ii}=(\log(i+1))^{-1/2}$. Then it is easily seen that in fact 
$\|X^\downarrow\|^2= \max_{n}\|X^{(n)}\|^2$, while every term 
$\|X^{(n)}\|^2$ is of comparable magnitude. We therefore see in this 
example that the residual dimension-dependence in Theorem 
\ref{thm:main2} is incurred entirely in the above estimate.

Notice that in contrast to the above estimate, we have the exact identity
$$
	\|X^\downarrow\|^2 =
	\sup_{v\in B_2}
	\sum_{n=1}^N \|X^{(n)}v\|^2.
$$
The previous estimate is sharp when each term in the sum is 
simultaneously maximized by the same vector $v$. As the 
matrices $X^{(n)}$ are independent and have vastly different dimensions 
and scales, it seems particularly unlikely that this will be the case. 
If it were possible to show that in fact 
$\|X^\downarrow\|^2\approx\max_{n}\|X^{(n)}\|^2$ holds in the general setting, 
then the slicing method could be adapted to prove Conjectures \ref{conj:1} 
and \ref{conj:2}. However, it is far from clear how this idea could be 
made precise, and it appears that the residual dimension-dependence in 
Theorem \ref{thm:main2} cannot be further reduced without the 
introduction of a genuinely new idea.

\section{Geometry}
\label{sec:geom}

The aim of this section is to exhibit a very useful mechanism to control 
the geometric structure of the Gaussian processes associated to Gaussian 
random matrices.  A direct application of this mechanism gives rise to 
dimension-free bounds on the spectral norm of Gaussian random matrices 
that can improve significantly on Theorem \ref{thm:bvh} for highly 
inhomogeneous matrices. Let us begin by formulating a general result that 
can be obtained by this method, from which Theorem \ref{thm:main3} and a 
number of other interesting consequences will follow as corollaries.

\subsection{A general result}

In the sequel, we will denote by $B$ the $d\times d$ symmetric matrix 
of variances of the entries of $X$, that is, $B_{ij} := b_{ij}^2$.  We 
denote by $B^+$ and $B^-$ its positive and negative parts, respectively; 
that is, if $B=\sum_i \lambda_i u_iu_i^*$ is the spectral decomposition of 
$B$, then $B^+:=\sum_i (\lambda_i\vee 0)u_iu_i^*$ and 
$B^-:= -\sum_i (\lambda_i\wedge 0)u_iu_i^*$.

\begin{thm}
\label{thm:main3ext}
Let $Y\sim N(0,B^-)$ be Gaussian with covariance matrix
$B^-$, and let $g_1,\ldots,g_d\sim N(0,1)$ be i.i.d.\ standard 
Gaussian variables.  Then for any $\gamma>0$
$$
	\E\|X\| \le
	\sqrt{2+\gamma+\gamma^{-1}}~
	\E\Bigg[\max_i\sqrt{\sum_j b_{ij}^2g_j^2}\Bigg] +
	\sqrt{\gamma}~\E\Big[\max_{i}Y_i\Big]+2\max_{ij}b_{ij}.
$$
\end{thm}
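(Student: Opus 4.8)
The plan is to realize $\E\|X\|$ as the expected supremum of a Gaussian process and to control it by Gaussian comparison. Writing $\|X\|=\sup_{v\in B_2}|\langle v,Xv\rangle|$, I would work with the centered process $v\mapsto\langle v,Xv\rangle$ and dispose of the absolute value by also carrying the sign (comparing on $B_2\times\{\pm\}$); this symmetrization, which accounts for both edges of the spectrum, is what will ultimately contribute the leading constant. The argument then rests on Sudakov--Fernique: if I can find a comparison Gaussian process whose increments dominate those of $\langle v,Xv\rangle$, then $\E\|X\|$ is bounded by the expected supremum of the comparison process. The design goal is to engineer this comparison process so that its supremum is exactly $\E[\max_i\sqrt{\sum_j b_{ij}^2 g_j^2}]$, with a separate, explicitly removable piece accounting for the indefiniteness of $B$.

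The computational core is the intrinsic metric. Using $\langle v,Xv\rangle-\langle w,Xw\rangle=\langle v-w,X(v+w)\rangle$ and independence of the entries, I would first establish the exact identity
$$ \E\big[(\langle v,Xv\rangle-\langle w,Xw\rangle)^2\big]=\langle q,Bq\rangle+\sum_{i\ne j}B_{ij}\,p_i^2 s_j^2, $$
where $p=v-w$, $s=v+w$, $q=(v_i^2-w_i^2)_i=p\odot s$, and $B_{ij}=b_{ij}^2$. To handle the indefinite $B$, I would augment the process by an independent copy $\sqrt\gamma\,N_v$ with $N_v=\sum_i v_i^2 Y_i$ and $Y\sim N(0,B^-)$. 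Since $\E\sup_{v\in B_2}(-\sqrt\gamma N_v)=\sqrt\gamma\,\E(\max_i Y_i)_+$ by the symmetry $Y\sim-Y$, the elementary splitting $\E\sup_v f_v\le\E\sup_v(f_v+h_v)+\E\sup_v(-h_v)$ peels off exactly a term of order $\sqrt\gamma\,\E[\max_i Y_i]$. The increments of the augmented process are $\langle q,Bq\rangle+\gamma\langle q,B^-q\rangle+\sum_{i\ne j}B_{ij}\,p_i^2s_j^2$, and the purpose of the augmentation is that the added $B^-$-mass should neutralize the negative part of $B$ that appears in the bilinear term.

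I would then dominate these increments by those of a single \emph{deformation} process. Splitting $B=B^+-B^-$ and discarding genuinely negative contributions, the surviving positive terms combine (after a weighted arithmetic--geometric inequality in the factors $p_i^2$ and $s_j^2$, which is the source of the $\gamma+\gamma^{-1}$, together with the sign symmetrization responsible for the $2$) into a metric of the form $\sum_i(B^+v^{\odot2})_i\,dv_i^2$. This is precisely the Euclidean metric pulled back through a nonlinear deformation $F$ of the ball, and for $B\succeq0$ it reads $\sum_i\|D_iv\|^2\,dv_i^2$ with $D_i=\mathrm{diag}(b_{i1},\ldots,b_{id})$. The mechanism is arranged so that the image $F(B_2)$ lies, up to the constant $\sqrt{2+\gamma+\gamma^{-1}}$ and a recentering, inside $\bigcup_i D_iB_2$; since $\E\sup_{x\in\bigcup_i D_iB_2}\langle g,x\rangle=\E[\max_i\|D_ig\|]=\E[\max_i\sqrt{\sum_j b_{ij}^2 g_j^2}]$, Sudakov--Fernique yields the first term of the bound. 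The residual $2\max_{ij}b_{ij}$ is then traced to the base point of $F$ and to the discarded diagonal contribution $\sum_i B_{ii}q_i^2$.

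The main obstacle is the increment-domination inequality itself, namely proving that $\langle q,Bq\rangle+\gamma\langle q,B^-q\rangle+\sum_{i\ne j}B_{ij}\,p_i^2s_j^2\le(2+\gamma+\gamma^{-1})\,\|F(v)-F(w)\|^2$ with $F(B_2)$ of the required Gaussian width. Two difficulties are intertwined. First, the indefinite bilinear term $\sum_{i\ne j}B_{ij}\,p_i^2s_j^2$ must be absorbed using the $\gamma\langle q,B^-q\rangle$ supplied by the augmentation, which is delicate because the two expressions involve $p,s$ in genuinely different ways (through $p_i^2s_j^2$ versus $(p_is_i)(p_js_j)$), so the freedom in $\gamma$ has to be exploited with care—simple sign-by-sign bounds fail. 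Second, the positive part must be shown to pull back from $\bigcup_i D_iB_2$, so that the right-hand side is \emph{exactly} $\E[\max_i\sqrt{\sum_j b_{ij}^2 g_j^2}]$ rather than a lossy surrogate that is larger by a factor $\sqrt{\log d}$; this identification of the deformation width with the target quantity is the geometric heart of the argument, and it is the step I expect to be hardest to make rigorous.
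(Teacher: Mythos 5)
Your central comparison step is not just hard to make rigorous --- it is false, and it fails at exactly the point flagged by the paper's own Example~\ref{ex:afonso}. You augment the \emph{matrix} process by an independent process $\sqrt\gamma\,N_v$, $N_v=\sum_i v_i^2Y_i$, and then need the increment domination $d(v,w)^2+\gamma\langle q,B^-q\rangle\le(2+\gamma+\gamma^{-1})\,\|x(v)-x(w)\|^2$ (your displayed inequality, with $F=x$ and using your exact metric identity, which is correct). But adding an independent process can only \emph{increase} the increments of the dominated process --- variances of independent increments add --- so the $B^-$ mass cannot ``neutralize'' the negative part of $\langle q,Bq\rangle$ on that side of the comparison. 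Concretely, take $B=\left(\begin{smallmatrix}\delta & 1\\ 1 & 0\end{smallmatrix}\right)$, $v=(a,b)$, $w=(b,a)$ as in Example~\ref{ex:afonso}: then $q=(a^2-b^2)(1,-1)$, $d(v,w)^2=\delta(a^2-b^2)^2$, $\langle q,B^-q\rangle\to 2(a^2-b^2)^2$, while $\|x(v)-x(w)\|^2=O(\delta^2)$; as $\delta\downarrow 0$ your left-hand side stays bounded below by $\gamma(a^2-b^2)^2$ for each fixed $\gamma>0$ while your right-hand side vanishes, so no choice of $\gamma$ or recentering of $F$ rescues the step. The correct arrangement is the mirror image of yours: the $Y$-term must sit in the \emph{comparison} process. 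The paper sets $Z_v:=\sqrt{2+\gamma+\gamma^{-1}}\,\langle x(v),g\rangle+\sqrt\gamma\sum_i v_i^2Y_i$ with $g$ independent of $Y$, so that $\E[(Z_v-Z_w)^2]=(2+\gamma+\gamma^{-1})\,\|x(v)-x(w)\|^2+\gamma\langle q,B^-q\rangle$, and the inequality actually needed is $d(v,w)^2\le(2+\gamma+\gamma^{-1})\,\|x(v)-x(w)\|^2-\gamma\langle q,Bq\rangle$ (Lemma~\ref{lem:basic}), after which one merely drops $-\langle q,Bq\rangle\le\langle q,B^-q\rangle$ into the comparison increment. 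You are also missing the algebra that makes $\|x(v)-x(w)\|^2$ appear at all: the triangle inequality $\|v+w\|_i\le\|v\|_i+\|w\|_i$, the identity $\sum_i(v_i\|w\|_i-w_i\|v\|_i)^2=\|x(v)-x(w)\|^2-\langle q,Bq\rangle$, and $2ab\le\gamma a^2+\gamma^{-1}b^2$; this is the true source of the factor $2+\gamma+\gamma^{-1}$, not sign symmetrization plus a weighted AM--GM in $p_i^2,s_j^2$ as you suggest.

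Two further misattributions would bite when writing this up. First, the absolute value: the paper does not compare on $B_2\times\{\pm\}$ (that would change the increments at pairs $((v,+),(w,-))$ to $\E[(\langle v,Xv\rangle+\langle w,Xw\rangle)^2]$, a quantity your metric identity no longer controls); instead it uses that $X$ and $-X$ are equidistributed together with the Gaussian Poincar\'e inequality to get $\E\|X\|\le\E\sup_{v\in B_2}\langle v,Xv\rangle+2\max_{ij}b_{ij}$, so the additive $2\max_{ij}b_{ij}$ comes entirely from this concentration step --- not from a base point of $F$ or a discarded diagonal (the diagonal is retained, with the correct sign, in the exact identity). Second, the width identification you expect to be the hardest part is in fact immediate: for $v\in B_2$ the vector $(v_j^2)_j$ lies in the simplex, so Cauchy--Schwarz gives the pointwise bound $\langle x(v),g\rangle\le\max_i\sqrt{\sum_j b_{ij}^2g_j^2}$, and no containment $F(B_2)\subseteq\bigcup_i D_iB_2$ needs to be established. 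A last small point: your peeled-off term is $\sqrt\gamma\,\E[(\max_i Y_i)_+]$, which can strictly exceed the $\sqrt\gamma\,\E[\max_i Y_i]$ of the statement; this is repaired by indexing the processes by the unit sphere, where $\sum_i v_i^2=1$ and $\sup_v\sum_i v_i^2Y_i=\max_i Y_i$ exactly.
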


As a first consequence, we deduce a sharp form of Theorem \ref{thm:main3}.

\begin{cor}
\label{cor:main3sharp}
There is a universal constant $C$ such that
$$
	\E\|X\| \le 2\max_i\sqrt{\sum_j b_{ij}^2} +
	C\max_{i}\bigg[\sum_j b_{ij}^4\bigg]^{1/4}\sqrt{\log(i+1)}.
$$
\end{cor}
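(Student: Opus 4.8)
The plan is to apply Theorem~\ref{thm:main3ext} with the single choice $\gamma = 1$. Since $2+\gamma+\gamma^{-1}\ge 4$ with equality exactly at $\gamma=1$, this is precisely the choice that makes the coefficient $\sqrt{2+\gamma+\gamma^{-1}}$ of the leading term equal to $2$, which is forced if we want the constant $2$ claimed in the corollary. With $\gamma = 1$ the theorem reads
\[
	\E\|X\| \le 2\,\E\Bigg[\max_i\sqrt{\sum_j b_{ij}^2 g_j^2}\Bigg] + \E\Big[\max_i Y_i\Big] + 2\max_{ij}b_{ij},
\]
and it remains to bound the three terms on the right by $2\max_i\sqrt{\sum_j b_{ij}^2}$ plus a universal multiple of $\max_i[\sum_j b_{ij}^4]^{1/4}\sqrt{\log(i+1)}$. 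Since $\E\|X\|$ is invariant under simultaneous permutations of rows and columns, I would assume throughout that the rows are ordered so that $r_i := [\sum_j b_{ij}^4]^{1/4}$ is nonincreasing in $i$.

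For the first term I would invoke Remark~\ref{rem:main1sharp}, which bounds $\E[\max_i\sqrt{\sum_j b_{ij}^2 g_j^2}]$ by $\max_i\sqrt{\sum_j b_{ij}^2}$ with constant one (producing exactly the factor $2$ after multiplication) plus $C\max_{ij}b_{ij}^*\sqrt{\log(i+1)}$. The elementary inequality $\max_j b_{ij} \le [\sum_j b_{ij}^4]^{1/4} = r_i$ shows that the decreasing rearrangement of $\max_j b_{ij}$ is dominated pointwise by that of $r_i$, whence $\max_{ij}b_{ij}^*\sqrt{\log(i+1)} \le \max_i r_i\sqrt{\log(i+1)}$. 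The same inequality disposes of the third term, as $2\max_{ij}b_{ij}\le 2r_1\le (2/\sqrt{\log 2})\,r_1\sqrt{\log 2}\le (2/\sqrt{\log 2})\max_i r_i\sqrt{\log(i+1)}$.

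The crux is the middle term $\E[\max_i Y_i]$, where $Y\sim N(0,B^-)$ and $B_{ij}=b_{ij}^2$. Each $Y_i$ is centered Gaussian with variance $(B^-)_{ii}$, so applying the one-sided maximal bound of Lemma~\ref{lem:supg1} directly to $Y_1,\dots,Y_d$ gives $\E[\max_i Y_i]\lesssim \max_i \tau_i^*\sqrt{\log(i+1)}$, where $\tau_i:=\sqrt{(B^-)_{ii}}$ and $\tau^*$ is its decreasing rearrangement. I would then establish the key estimate $(B^-)_{ii}\le\sqrt{\sum_j b_{ij}^4}$, i.e.\ $\tau_i\le r_i$. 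This follows from the orthogonality of the positive and negative parts: since $B^+B^-=0$ we have $B^2=(B^+)^2+(B^-)^2$, so $((B^-)^2)_{ii}\le (B^2)_{ii}=\sum_j b_{ij}^4$; combined with Cauchy--Schwarz this yields $(B^-)_{ii}=\langle e_i,B^- e_i\rangle\le\|B^- e_i\|=((B^-)^2)_{ii}^{1/2}\le(\sum_j b_{ij}^4)^{1/2}=r_i^2$. By monotonicity of rearrangements $\tau_i^*\le r_i^*=r_i$, and hence $\E[\max_i Y_i]\lesssim\max_i r_i\sqrt{\log(i+1)}$.

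Collecting the three bounds gives $\E\|X\|\le 2\max_i\sqrt{\sum_j b_{ij}^2}+C\max_i[\sum_j b_{ij}^4]^{1/4}\sqrt{\log(i+1)}$ with a universal $C$, as claimed. The only genuinely non-routine step is the linear-algebraic estimate $(B^-)_{ii}\le\sqrt{\sum_j b_{ij}^4}$ linking the diagonal of the negative part of the variance matrix to the fourth moments of the rows; everything else is an assembly of Remark~\ref{rem:main1sharp}, Lemma~\ref{lem:supg1}, and the pointwise monotonicity of decreasing rearrangements.
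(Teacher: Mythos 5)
Your proof is correct and follows essentially the same route as the paper: apply Theorem~\ref{thm:main3ext} with $\gamma=1$, control the Gaussian term via Remark~\ref{rem:main1sharp} together with $\max_j b_{ij}\le\big[\sum_j b_{ij}^4\big]^{1/4}$, and bound $\E\big[\max_i Y_i\big]$ using the identity $B^2=(B^+)^2+(B^-)^2$ to get $B^-_{ii}\le\big(\sum_j b_{ij}^4\big)^{1/2}$ and then Lemma~\ref{lem:supg1}. Your only departures are to make explicit the rearrangement bookkeeping and the absorption of the $2\max_{ij}b_{ij}$ term into the second summand, both of which the paper leaves implicit.
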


\begin{proof}
As $B^2 = (B^+)^2+(B^-)^2$, we have
$$
	\sum_j (B^+_{ij})^2 +
	\sum_j (B^-_{ij})^2 =
	\sum_j B_{ij}^2 = 
	\sum_j b_{ij}^4.
$$
Therefore,
$$
	\mathrm{Var}(Y_i) =
	B^-_{ii} \le
	\sqrt{\sum_j b_{ij}^4}
$$
for every $i$, and Lemma \ref{lem:supg1} gives
$$
	\E\Big[\max_i Y_i\Big] \lesssim
	\max_{i}\bigg[\sum_j b_{ij}^4\bigg]^{1/4}\sqrt{\log(i+1)}.
$$
On the other hand, by Remark \ref{rem:main1sharp}, we have
\begin{align*}
        \E\Bigg[\max_i\sqrt{\sum_j b_{ij}^2g_j^2}\Bigg] &\le
        \max_i\sqrt{\sum_j b_{ij}^2} +
        C'\max_{ij}b_{ij}^*\sqrt{\log(i+1)} \\
	&\le
        \max_i\sqrt{\sum_j b_{ij}^2} +
	C'\max_{i}\bigg[\sum_j b_{ij}^4\bigg]^{1/4}\sqrt{\log(i+1)}
\end{align*}
for a universal constant $C'$.  Now apply Theorem 
\ref{thm:main3ext} with $\gamma=1$.
\end{proof}

Let us note that the leading term in the first inequality of 
Corollary~\ref{cor:main3sharp} is sharp.  To see this, consider the 
example of a Wigner matrix where $b_{ij}=1$ for all $i,j$.  Then the first 
inequality yields $\E\|X\| \le 2\sqrt{d} + o(d)$, which precisely matches 
the exact asymptotic $\|X\| \sim 2\sqrt{d}$ as $d\to\infty$ \cite[Theorem 
2.1.22]{AGZ10}.  On the other hand, the second term in this inequality is 
suboptimal, as can be seen by considering the example where $B$ is a band 
matrix with $b_{ij}=1$ inside a diagonal band of width $\sim\sqrt{\log d}$ 
and $b_{ij}=0$ outside the band (compare the conclusion of 
Corollary~\ref{cor:main3sharp} with that of Theorem~\ref{thm:bvh}).  In 
cases such as the latter example where the second term dominates, 
Corollary~\ref{cor:main3sharp} can be improved slightly by optimizing over
$\gamma$.

\begin{cor}
\label{cor:main3opt}
The expected spectral norm satisfies
\begin{multline*}
	\E\|X\| \lesssim
	\max_i\sqrt{\sum_j b_{ij}^2} + 
	\max_{ij}b_{ij}^*\sqrt{\log i} + \mbox{} \\
	\Bigg[
	\max_i\sqrt{\sum_j b_{ij}^2}
	+\max_{ij}b_{ij}^*\sqrt{\log i}
	\Bigg]^{1/2}\Bigg[
	\max_{i}\bigg[\sum_j b_{ij}^4\bigg]^{1/4}\sqrt{\log i}
	\Bigg]^{1/2}.
\end{multline*}
\end{cor}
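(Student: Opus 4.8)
The plan is to optimize the free parameter $\gamma$ in Theorem~\ref{thm:main3ext}. I would abbreviate
$$
A := \E\Bigg[\max_i\sqrt{\sum_j b_{ij}^2 g_j^2}\Bigg], \qquad D := \E\Big[\max_i Y_i\Big],
$$
and set $\sigma := \max_i\sqrt{\sum_j b_{ij}^2}$, $\Gamma := \max_{ij} b_{ij}^*\sqrt{\log i}$, and $\Delta := \max_i[\sum_j b_{ij}^4]^{1/4}\sqrt{\log i}$, so that the claimed bound reads $\E\|X\| \lesssim \sigma + \Gamma + \sqrt{(\sigma+\Gamma)\Delta}$ and Theorem~\ref{thm:main3ext} states $\E\|X\| \le \sqrt{2+\gamma+\gamma^{-1}}\,A + \sqrt{\gamma}\,D + 2\max_{ij}b_{ij}$. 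Before optimizing I would record the two ingredients already in hand: Remark~\ref{rem:main1sharp} gives $A \lesssim \sigma + \Gamma$, while the argument in the proof of Corollary~\ref{cor:main3sharp} gives $D \lesssim \max_i[\sum_j b_{ij}^4]^{1/4}\sqrt{\log(i+1)}$.

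Next I would dispose of the two harmless terms. Since $A \gtrsim \sigma$ by the lower bound in Theorem~\ref{thm:main1ext} and $\sigma \ge \max_{ij}b_{ij}$ trivially, the additive $2\max_{ij}b_{ij}$ is absorbed into the first term and may be dropped at the cost of a universal constant. Restricting attention to $\gamma \le 1$ and using the elementary inequality $2+\gamma+\gamma^{-1}\le 4\gamma^{-1}$ (valid precisely on this range), the prefactor obeys $\sqrt{2+\gamma+\gamma^{-1}}\le 2\gamma^{-1/2}$, so the bound simplifies to
$$
\E\|X\| \lesssim \gamma^{-1/2}A + \gamma^{1/2}D, \qquad 0 < \gamma \le 1.
$$

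The heart of the argument is then a one-line optimization: choosing $\gamma = \min(1,\,A/D)$ splits into two regimes. If $A\le D$, the balanced choice $\gamma = A/D\le 1$ yields $\gamma^{-1/2}A + \gamma^{1/2}D \asymp \sqrt{AD}$; if $A>D$, the choice $\gamma=1$ yields $\lesssim A+D\lesssim A$. In either case $\E\|X\|\lesssim A + \sqrt{AD}$. This is the step I expect to require the most care—not because it is deep, but because one must check that the AM--GM optimum actually lands in the admissible range $\gamma\le 1$ and treat the complementary regime separately; a careless choice of $\gamma$ reintroduces a spurious $\sqrt{\gamma}(A+D)$ contribution (coming from the large-$\gamma$ behaviour of the prefactor) that would ruin the estimate.

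Finally I would substitute the recorded bounds. Converting $\sqrt{\log(i+1)}$ to $\sqrt{\log i}$ via $\sqrt{\log(i+1)}\le\sqrt{\log 2}+\sqrt{\log i}$ and noting $[\sum_j b_{1j}^4]^{1/4}\le[\sum_j b_{1j}^2]^{1/2}\le\sigma$ gives $D\lesssim\sigma+\Delta$. Plugging $A\lesssim\sigma+\Gamma$ and $D\lesssim\sigma+\Delta$ into $A+\sqrt{AD}$ and applying subadditivity of the square root,
$$
\sqrt{(\sigma+\Gamma)(\sigma+\Delta)} \le \sqrt{(\sigma+\Gamma)\sigma} + \sqrt{(\sigma+\Gamma)\Delta} \le (\sigma+\Gamma) + \sqrt{(\sigma+\Gamma)\Delta},
$$
collapses the estimate to $\E\|X\|\lesssim(\sigma+\Gamma)+\sqrt{(\sigma+\Gamma)\Delta}$, which is exactly the claimed bound. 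The residual $\sigma$ hidden inside $D$ is thereby absorbed automatically into the first two terms, so it needs no separate treatment.
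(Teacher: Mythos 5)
Your proposal is correct and follows exactly the route the paper intends: its proof of this corollary is the one-liner ``apply Theorems~\ref{thm:main3ext} and \ref{thm:main1} and optimize over $\gamma>0$,'' using the variance bound $\mathrm{Var}(Y_i)\le\sqrt{\sum_j b_{ij}^4}$ from the proof of Corollary~\ref{cor:main3sharp}, all of which you reproduce. Your extra care with the admissible range $\gamma\le 1$ (via $2+\gamma+\gamma^{-1}\le 4\gamma^{-1}$) and with absorbing $2\max_{ij}b_{ij}$ and the residual $\sigma$ inside $D$ just makes explicit the details the paper leaves to the reader.
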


\begin{proof}
Apply Theorems \ref{thm:main3ext} and \ref{thm:main1} and optimize 
over $\gamma>0$.   
\end{proof}

Despite the suboptimal nature of the second term in 
Corollaries~\ref{cor:main3sharp} and \ref{cor:main3opt}, these results can 
improve significantly on Theorem \ref{thm:bvh} for highly inhomogeneous 
matrices.  To illustrate this, let us use Corollary \ref{cor:main3sharp} 
to derive a delicate (but much less sharp) result of Lata{\l}a 
\cite{Lat05} that could not be recovered from Theorem \ref{thm:bvh}.

\begin{cor}[\cite{Lat05}]
The expected spectral norm satisfies
$$
	\E\|X\| \lesssim
	\max_i\sqrt{\sum_j b_{ij}^2} + 
	\sqrt[4]{\sum_{ij} b_{ij}^4}.
$$
\end{cor}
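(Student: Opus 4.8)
The plan is to read off this bound from the sharp form already established in Corollary~\ref{cor:main3sharp}. That inequality supplies the first term $\max_i\sqrt{\sum_j b_{ij}^2}$ verbatim (indeed with constant~$2$), so the whole task reduces to dominating its second term
$$
	\max_i\bigg[\sum_j b_{ij}^4\bigg]^{1/4}\sqrt{\log(i+1)}
$$
by the single scalar $\big(\sum_{ij}b_{ij}^4\big)^{1/4}$. I would first observe that every quantity involved is invariant under simultaneous permutation of the rows and columns of $X$: in particular $r_i:=\sum_j b_{ij}^4$ is unaffected by permuting the column index, so we may freely relabel rows. I would therefore reorder so that $r_1\ge r_2\ge\cdots\ge r_d$, which is precisely the decreasing rearrangement in which Lemma~\ref{lem:supg1} is invoked inside the proof of Corollary~\ref{cor:main3sharp}; in this labeling the second term becomes $\max_i r_i^{1/4}\sqrt{\log(i+1)}$.

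The crux is then a one-line consequence of monotonicity. Setting $S:=\sum_{ij}b_{ij}^4=\sum_i r_i$, the decreasing ordering gives $i\,r_i\le\sum_{k\le i}r_k\le S$, so that $r_i\le S/i$ for every $i$ and hence
$$
	r_i^{1/4}\sqrt{\log(i+1)}\le S^{1/4}\,\frac{\sqrt{\log(i+1)}}{i^{1/4}}.
$$
Since the numerical sequence $i^{-1/4}\sqrt{\log(i+1)}$ is bounded by a universal constant over $i\ge1$ (it is finite at each term and tends to $0$), taking the maximum over $i$ yields $\max_i r_i^{1/4}\sqrt{\log(i+1)}\lesssim S^{1/4}=\big(\sum_{ij}b_{ij}^4\big)^{1/4}$. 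Substituting this into Corollary~\ref{cor:main3sharp} gives the claim.

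I do not anticipate a genuine obstacle: all the real work is already contained in Corollary~\ref{cor:main3sharp}, and what remains is the elementary interplay between a decreasing rearrangement and the subpolynomial growth of $\sqrt{\log(i+1)}$. The only point meriting a moment's care is the bookkeeping around the rearrangement --- namely that the index $i$ appearing in Corollary~\ref{cor:main3sharp} should be read as the rank of $r_i$ in decreasing order, and that passing to this order is free because $\E\|X\|$, $\max_i\sqrt{\sum_j b_{ij}^2}$, and $\sum_{ij}b_{ij}^4$ are all permutation-invariant. Conceptually, the bound $r_i\le S/i$ is exactly what suppresses the explicit logarithm: rows carrying a large weight $\sqrt{\log(i+1)}$ are forced to have correspondingly small $r_i$, so the logarithm cannot accumulate.
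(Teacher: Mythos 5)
Your proposal is correct and matches the paper's own proof essentially verbatim: the paper likewise reorders rows so that $\sum_j b_{ij}^4$ is nonincreasing, deduces $\sum_j b_{ij}^4 \le \frac{1}{i}\sum_{kj} b_{kj}^4$, and concludes from Corollary~\ref{cor:main3sharp}. Your added remarks --- that $i^{-1/4}\sqrt{\log(i+1)}$ is uniformly bounded and that the reordering is harmless by permutation invariance --- are exactly the details the paper leaves implicit in ``follows readily.''
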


\begin{proof}
We may assume without loss of generality that the rows and columns of $X$ 
have been ordered such that $\sum_j b_{ij}^4$ is nonincreasing in $i$.  
Then we must have
$$
	\sum_j b_{ij}^4 \le
	\frac{1}{i}\sum_{ij} b_{ij}^4
$$
for all $i$, and the conclusion follows readily from Corollary 
\ref{cor:main3sharp}.
\end{proof}

The above corollaries are based on a rather crude estimate on the variance 
of the random variables $Y_i$ that appear in Theorem~\ref{thm:main3ext} 
(see the proof of Corollary~\ref{cor:main3sharp}).  Unfortunately, it 
seems that this estimate cannot be significantly improved in general, 
which indicates that there is genuine inefficiency in the proof of 
Theorem~\ref{thm:main3ext}.  The apparent origin of this inefficiency will 
be discussed in some detail in the sequel.  It is interesting to note, 
however, that there are certain special cases where 
Theorem~\ref{thm:main3ext} already provides substantially better results 
than is suggested by Corollary~\ref{cor:main3sharp}.  For example, 
Theorem~\ref{thm:main3ext} immediately resolves 
Conjecture~\ref{conj:1} (with optimal constant!) under the strong 
assumption that the matrix of variances $B$ is positive semidefinite.

\begin{cor}
\label{cor:posdef}
If $B$ is positive semidefinite, then
$$
	\E\|X\| \le 2\,\E\Bigg[\max_i\sqrt{\sum_j b_{ij}^2g_j^2}\Bigg]
	+ 2\max_{ij}b_{ij}.
$$
\end{cor}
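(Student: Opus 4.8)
The plan is to derive this corollary as an immediate specialization of Theorem~\ref{thm:main3ext}, exploiting the fact that positive semidefiniteness of the variance matrix $B$ annihilates its negative part and hence the second term on the right-hand side of that theorem. The first step is the structural observation that if $B$ is positive semidefinite, then every eigenvalue $\lambda_i$ in the spectral decomposition $B = \sum_i \lambda_i u_i u_i^*$ is nonnegative, so that $\lambda_i \wedge 0 = 0$ for all $i$. By the definition of the negative part given in the text, this forces $B^- = 0$.

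With $B^- = 0$ in hand, the Gaussian vector $Y \sim N(0, B^-) = N(0,0)$ is degenerate: $Y = 0$ almost surely, whence $\E[\max_i Y_i] = 0$. The middle term $\sqrt{\gamma}\,\E[\max_i Y_i]$ appearing in Theorem~\ref{thm:main3ext} therefore vanishes identically, regardless of the choice of $\gamma > 0$, and the bound collapses to
$$
	\E\|X\| \le \sqrt{2 + \gamma + \gamma^{-1}}~
	\E\Bigg[\max_i\sqrt{\sum_j b_{ij}^2 g_j^2}\Bigg] + 2\max_{ij}b_{ij}.
$$
The final step is then to optimize the prefactor over $\gamma$. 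By the arithmetic--geometric mean inequality, $\gamma + \gamma^{-1} \ge 2$ with equality precisely at $\gamma = 1$, so $2 + \gamma + \gamma^{-1} \ge 4$ and the smallest possible prefactor is $\sqrt{4} = 2$, attained at $\gamma = 1$. Substituting $\gamma = 1$ yields exactly the claimed inequality, with the optimal leading constant.

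I do not anticipate any genuine obstacle: once Theorem~\ref{thm:main3ext} is available, the corollary reduces to the elementary identity $B^- = 0$ together with a one-line minimization. The only point that merits a moment's care is confirming that positive semidefiniteness, under the spectral definition of $B^-$ adopted in this section, really does give $B^- = 0$ rather than merely a small negative part; this is immediate but worth stating explicitly, since it is precisely the mechanism that removes the inefficient term and thereby delivers Conjecture~\ref{conj:1} with a sharp constant in this special case.
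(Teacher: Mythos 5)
Your proposal is correct and coincides with the paper's own proof: the author likewise notes that positive semidefiniteness gives $B^-=0$ and then applies Theorem~\ref{thm:main3ext} with $\gamma=1$. Your additional remark that $\gamma=1$ is optimal, since $2+\gamma+\gamma^{-1}\ge 4$ by AM--GM, is a correct (if implicit in the paper) justification for that choice.
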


\begin{proof}
In this case $B^-=0$, and the result follows from
Theorem \ref{thm:main3ext} with $\gamma=1$.
\end{proof}

Along similar lines, it is not difficult to see that if $B$ has at most 
$k$ negative eigenvalues, than the conclusion of Conjecture~\ref{conj:1} 
holds with a constant that depends on $k$ only (so that the conjecture is 
established if $B$ has $O(1)$ negative eigenvalues).  On the other hand, 
there are other cases where the special structure of $B$ makes it possible 
to deduce Conjecture~\ref{conj:1} from Theorem \ref{thm:main3ext}.  For 
example, if 
$$
	B=B'\otimes\begin{pmatrix} 0 & 1 \\ 1 & 0\end{pmatrix}
$$
where $B'$ is a positive semidefinite matrix, then
$$
	B^- = B'\otimes\frac{1}{2}\begin{pmatrix} 1 & -1 \\ -1 & 
		1\end{pmatrix},
$$
so that $\mathrm{Var}(Y_i) \le \frac{1}{2}\max_j b_{ij}^2$ for all $i$; 
then arguing as in the proof of Corollary~\ref{cor:main3sharp} and 
applying Theorem~\ref{thm:main1} immediately yields 
Conjecture~\ref{conj:1}.  All of these special cases are restrictive; 
however, they emphasize that the approach developed in this 
section can already extend significantly beyond the result of Theorem 
\ref{thm:main3}.

\subsection{Proof of Theorem \ref{thm:main3ext}}

We begin by recalling that
$$
        \E\|X\| = \E\bigg[\sup_{v\in B_2}|\langle v,Xv\rangle|\Bigg]
$$
is the expected supremum of a Gaussian process indexed by the Euclidean 
unit ball $B_2$.  It is well known that the supremum of a Gaussian 
process is intimately connected with the geometry defined by the 
associated (semi)metric
$$
        d(v,w)^2:=\E[|\langle v,Xv\rangle-\langle w,Xw\rangle|^2].
$$
The difficulty we face is to understand how to control this rather 
strange geometry.

To motivate the device that we will use for this purpose, let us disregard 
for the moment the natural metric $d$ and consider instead a simpler 
quantity, the variance of the Gaussian process.  We can easily compute
$$
	\E[\langle v,Xv\rangle^2] =
	2\sum_{i\ne j} v_i^2b_{ij}^2v_j^2 +
	\sum_i b_{ii}^2 v_i^4
	\le
	2\sum_{ij} v_i^2b_{ij}^2v_j^2.
$$
We now observe that this expression can be reorganized in a suggestive
manner.  Define the norm $\|\cdot\|_i$ on $\mathbb{R}^d$ and the nonlinear 
map $x:\mathbb{R}^d\to\mathbb{R}^d$ as
$$
	\|v\|_i^2 := \sum_j b_{ij}^2v_j^2,\qquad\quad
	x_i(v) := v_i\|v\|_i,
$$
and consider a second Gaussian process
$$
	\langle x(v),g\rangle :=
	\sum_i x_i(v)g_i
$$
where $g_1,\ldots,g_d$ are i.i.d.\ standard Gaussian variables.  Then
$$
	\E[\langle x(v),g\rangle^2] = 
	\|x(v)\|^2 =
	\sum_{ij} v_i^2b_{ij}^2v_j^2.
$$
In particular, we see that the variance of the Gaussian process 
$\{\langle v,Xv\rangle\}_{v\in B_2}$ associated with our random matrix
is dominated up to a constant by the variance of the Gaussian process
$\{\langle x(v),g\rangle\}_{v\in B_2}$.  The latter process is precisely 
what we would like to obtain in our upper bound, as we immediately 
compute
$$
	\sup_{v\in B_2}\langle x(v),g\rangle \le 
	\sup_{v\in B_2}\sqrt{\sum_{ij}v_i^2b_{ij}^2g_j^2} =
	\max_i\sqrt{\sum_j b_{ij}^2g_j^2}
$$
using the Cauchy-Schwarz inequality and the fact that the map $v\mapsto
(v_i^2)$ maps the Euclidean unit ball in $\mathbb{R}^d$ onto the 
$d$-dimensional simplex.

Unfortunately, an inequality between the variances of Gaussian 
processes does not suffice to control the suprema of these processes.  
What is sufficient, however, is to establish such an inequality 
between the natural distances of these Gaussian processes: if we could 
show that the natural distance of the Gaussian process 
$\{\langle v,Xv\rangle\}_{v\in B_2}$ is dominated by the natural distance 
of $\{\langle x(v),g\rangle\}_{v\in B_2}$, that is,
$$
	d(v,w) \,\stackrel{?}{\lesssim}\,
	\|x(v)-x(w)\|,
$$
then the conclusion of Conjecture \ref{conj:1} would follow immediately 
from the Slepian-Fernique lemma \cite[Theorem 13.3]{BLM13}.
Unfortunately, this inequality does not always hold, see section 
\ref{sec:disc} below.  However, it turns out that such an inequality 
\emph{nearly} holds, and this is the key device that will be exploited in 
this section.

\begin{lem}[The basic principle]
\label{lem:basic}
For every $v,w\in\mathbb{R}^d$ and $\gamma>0$
$$
	d(v,w)^2 \le
	(2+\gamma+\gamma^{-1})\,\|x(v)-x(w)\|^2 -
	\gamma
	\sum_{ij}(v_i^2-w_i^2)b_{ij}^2(v_j^2-w_j^2).
$$
\end{lem}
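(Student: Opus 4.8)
The plan is to reduce the statement to an explicit comparison of quadratic expressions in the $b_{ij}$ and then to a single sharp Cauchy--Schwarz inequality.

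First I would compute $d(v,w)^2$ exactly. Since
$$
\langle v,Xv\rangle-\langle w,Xw\rangle=\sum_i(v_i^2-w_i^2)X_{ii}+2\sum_{i>j}(v_iv_j-w_iw_j)X_{ij}
$$
is a Gaussian variable whose coefficients multiply the independent entries $\{X_{ij}:i\ge j\}$, taking its variance gives
$$
d(v,w)^2=\sum_i b_{ii}^2(v_i^2-w_i^2)^2+2\sum_{i\ne j}b_{ij}^2(v_iv_j-w_iw_j)^2\le 2\sum_{ij}b_{ij}^2(v_iv_j-w_iw_j)^2=:2T,
$$
exactly in the spirit of the variance computation already carried out in the text; the diagonal term is discarded at no cost.

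Next I would relate $T$ to the two quantities on the right. Writing $\|v\|_i^2=\sum_jb_{ij}^2v_j^2$, $x_i(v)=v_i\|v\|_i$ and $R_i:=\sum_j b_{ij}^2 v_jw_j$, a row-by-row expansion of $T$ yields $T=\|x(v)-x(w)\|^2+W$ with $W:=2\sum_i v_iw_i(\|v\|_i\|w\|_i-R_i)$, where $\|v\|_i\|w\|_i\ge R_i$ by Cauchy--Schwarz within each row. Setting $S:=\|x(v)-x(w)\|^2=\sum_iP_i^2$ and $U:=\sum_iQ_i^2$ with $P_i:=v_i\|v\|_i-w_i\|w\|_i$ and $Q_i:=v_i\|w\|_i-w_i\|v\|_i$, a second short expansion shows that the correction term equals $\sum_{ij}(v_i^2-w_i^2)b_{ij}^2(v_j^2-w_j^2)=S-U$. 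The target then rearranges to $2W\le \gamma^{-1}S+\gamma U$ for all $\gamma>0$, which by Young's inequality is implied by the single sharp bound $W\le\sqrt{SU}$.

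The heart of the matter, and what I expect to be the main obstacle, is this bound $W\le\sqrt{SU}$. It cannot be obtained row by row: for the $2\times 2$ matrix with zero diagonal and unit off-diagonal entries, $v=(1,1)$, $w=(1,-1)$ one finds $P_1=Q_1=0$ while the first row still contributes to $W$, so the positive contribution of one row must be paid for by the other rows. The device I would use is to compare $W$ not with $\sqrt{SU}$ directly but with $\sum_iP_iQ_i$. Expanding and using $\|v\|_i^2+\|w\|_i^2-2R_i=\|v-w\|_i^2$ gives
$$
\sum_iP_iQ_i-W=\sum_i(v_i-w_i)^2\|v\|_i\|w\|_i-\sum_iv_iw_i\|v-w\|_i^2.
$$
The symmetry $b_{ij}=b_{ji}$ now lets me swap the summation indices in the last sum to rewrite it as $\sum_i(v_i-w_i)^2R_i$, whence $\sum_iP_iQ_i-W=\sum_i(v_i-w_i)^2(\|v\|_i\|w\|_i-R_i)$, which is nonnegative term by term by the per-row Cauchy--Schwarz inequality. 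Thus $W\le\sum_iP_iQ_i\le\sqrt{SU}$ by a final Cauchy--Schwarz over $i$, and assembling $d(v,w)^2\le 2S+2W\le(2+\gamma^{-1})S+\gamma U=(2+\gamma+\gamma^{-1})S-\gamma(S-U)$ finishes the proof. The only genuinely nontrivial input is this index swap, which converts the coupling across rows into a manifestly nonnegative sum; everything else is bookkeeping.
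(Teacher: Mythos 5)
Your proof is correct and is essentially the paper's own argument in different bookkeeping: both routes arrive at the identical intermediate bound $d(v,w)^2\le 2\|x(v)-x(w)\|^2+2\sum_i P_iQ_i$ with exactly the same total slack, since your index swap followed by the per-row Cauchy--Schwarz inequality $R_i\le\|v\|_i\|w\|_i$ is precisely the paper's triangle inequality $\|v+w\|_i\le\|v\|_i+\|w\|_i$ in disguise (note $\|v+w\|_i^2=\|v\|_i^2+\|w\|_i^2+2R_i$), and your identity $S-U=\sum_{ij}(v_i^2-w_i^2)b_{ij}^2(v_j^2-w_j^2)$ is the paper's closing computation. The only cosmetic difference is that you split the cross term by a global Cauchy--Schwarz over $i$ followed by Young's inequality, whereas the paper applies $2ab\le\gamma a^2+\gamma^{-1}b^2$ termwise; the resulting bound is the same for every $\gamma>0$.
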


\begin{proof}
We first compute $d(v,w)$:
\begin{align*}
        d(v,w)^2 &=
        \E[\langle v+w,X(v-w)\rangle^2] 
	\phantom{\sum_i} \\
        &=
        \sum_i (v_i^2-w_i^2)^2b_{ii}^2 +
        \sum_{i>j} \{(v_i+w_i)(v_j-w_j)+(v_i-w_i)(v_j+w_j)\}^2b_{ij}^2
        \\
        &=
        \sum_{ij} (v_i+w_i)^2b_{ij}^2(v_j-w_j)^2 + 
        \sum_{i\ne j} (v_i^2-w_i^2)b_{ij}^2(v_j^2-w_j^2).
\end{align*}
We can now estimate using the triangle inequality $\|v+w\|_i\le 
\|v\|_i+\|w\|_i$
\begin{align*}
        d(v,w)^2 &=
        \sum_{i} (v_i-w_i)^2\|v+w\|_i^2 +
        \sum_{i\ne j} (v_i^2-w_i^2)b_{ij}^2(v_j^2-w_j^2) \\
        &\le
        \sum_{i} (v_i-w_i)^2(\|v\|_i+\|w\|_i)^2 + 
        \sum_{ij} (v_i^2-w_i^2)b_{ij}^2(v_j^2-w_j^2) \\
        &=
        \sum_{i} (v_i-w_i)^2(\|v\|_i+\|w\|_i)^2 \\
        &\qquad \mbox{}+
        \sum_{i} (v_i-w_i)(\|v\|_i+\|w\|_i)
                (v_i+w_i)(\|v\|_i-\|w\|_i) \\
        &=
        2\sum_{i} (v_i-w_i)(\|v\|_i+\|w\|_i)(v_i\|v\|_i-w_i\|w\|_i) \\
	&=
	2\|x(v)-x(w)\|^2 +
	2\sum_i (v_i\|w\|_i-w_i\|v\|_i)(v_i\|v\|_i-w_i\|w\|_i).
\end{align*}
The elementary inequality $2ab\le \gamma a^2+\gamma^{-1}b^2$ gives
$$
	d(v,w)^2 \le
	(2+\gamma^{-1})\|x(v)-x(w)\|^2 +
	\gamma\sum_i (v_i\|w\|_i-w_i\|v\|_i)^2
$$
for any $\gamma>0$.  We now compute
\begin{align*}
	&\sum_i (v_i\|w\|_i-w_i\|v\|_i)^2 \\
	&\quad=
	2\sum_{ij} v_i^2b_{ij}^2w_j^2 -
	2\langle x(v),x(w)\rangle \\
	&\quad=
	\|x(v)-x(w)\|^2 +
	2\sum_{ij} v_i^2b_{ij}^2w_j^2 -
	\sum_{ij} v_i^2b_{ij}^2v_j^2 -
	\sum_{ij} w_i^2b_{ij}^2w_j^2 \\
	&\quad=
	\|x(v)-x(w)\|^2 -
	\sum_{ij} (v_i^2-w_i^2)b_{ij}^2(v_j^2-w_j^2).
\end{align*}
Combining these bounds completes the proof.
\end{proof}

With Lemma \ref{lem:basic} in hand, we can now easily complete the
proof of Theorem \ref{thm:main3ext}.

\begin{proof}[Proof of Theorem \ref{thm:main3ext}]
We begin by noting that the spectral norm of a symmetric matrix
is the largest magnitude of its maximal and minimal eigenvalues, that is,
$$
	\|X\| = \sup_{v\in B_2}|\langle v,Xv\rangle|
	=\sup_{v\in B_2}\langle v,Xv\rangle \vee
	\sup_{v\in B_2}\langle v,(-X)v\rangle.
$$
As $X$ and $-X$ have the same distribution, we can estimate
\begin{align*}
	\E\|X\| &\le
	\E\bigg[\sup_{v\in B_2}\langle v,Xv\rangle\bigg] +
	\sqrt{2}\,\mathrm{Var}\bigg[
	\sup_{v\in B_2}\langle v,Xv\rangle
	\bigg]^{1/2}
	\\
	&\le \E\bigg[\sup_{v\in B_2}\langle v,Xv\rangle\bigg] +
	2\max_{ij}b_{ij},
\end{align*}
where we used the Gaussian Poincar\'e inequality 
\cite[Theorem 3.20]{BLM13} in the second inequality.
To proceed, assume without loss of generality that $Y\sim N(0,B^-)$ is
independent of $g_1,\ldots,g_d$, and define the Gaussian process
$\{Z_v\}_{v\in B_2}$ as follows:
$$
	Z_v := \sqrt{2+\gamma+\gamma^{-1}}\,\langle x(v),g\rangle + 
	\sqrt{\gamma}\sum_i v_i^2 Y_i.
$$
The natural distance of this Gaussian process satisfies
$$
	\E[|Z_v-Z_w|^2] =
	(2+\gamma+\gamma^{-1})\,\|x(v)-x(w)\|^2 +
	\gamma\sum_{ij} (v_i^2-w_i^2)B^-_{ij}(v_j^2-w_j^2)
	\ge d(v,w)^2
$$
by Lemma \ref{lem:basic}.  We therefore obtain
$$
	\E\bigg[\sup_{v\in B_2}\langle v,Xv\rangle\bigg]
	\le
	\E\bigg[
	\sup_{v\in B_2}Z_v
	\bigg]
$$
by the Slepian-Fernique inequality \cite[Theorem 13.3]{BLM13}.
A simple application of the Cauchy-Schwarz inequality as discussed
before Lemma \ref{lem:basic} completes the proof.
\end{proof}

\subsection{Discussion}
\label{sec:disc}

It is instructive to discuss the geometric significance of the basic 
principle described by Lemma~\ref{lem:basic}.  The clearest illustration 
of this device appears in the setting of Corollary \ref{cor:posdef} 
where the matrix of variances $B$ is positive semidefinite.  In this case, 
the second term in Lemma~\ref{lem:basic} is nonpositive, and we obtain
$$
	d(v,w) \le 2\|x(v)-x(w)\|.
$$
This inequality maps the geometry of the metric space 
$(B_2,d)$ onto the Euclidean geometry of the nonlinear 
deformation of the unit ball
$$
        B_* := \{x(v):v\in B_2\},
$$
which is much easier to understand.

\begin{example}
The trivial case of this construction appears in the example of a Wigner 
matrix where $b_{ij}=1$ for all $i,j$.  In this special case, the 
nonlinear deformation has no effect and $B_*=B_2$ is simply the Euclidean 
unit ball.  Applying the Slepian-Fernique inequality in this setting shows that
$$
	\E\|X\|\lesssim \E\bigg[\sup_{t\in B_*}\langle t,g\rangle\bigg] =
	\E\|g\|\le \sqrt{d}.
$$
This idea is not new: our approach reduces in this trivial 
setting to the well-known method of Gordon for estimating the norm of 
Wigner matrices \cite[section 5.3.1]{Ver12}.  However, the 
crucial insight developed here is that the geometry of $B_*$ changes 
drastically when we depart from the simple setting of Wigner matrices, 
which is not captured by Gordon's method.  This is illustrated in the 
following example.
\end{example}

\begin{example}
Consider the example of a diagonal random matrix where 
$b_{ij}=\mathbf{1}_{i=j}$.  In this case, the nonlinear deformation 
transforms the Euclidean unit ball into the $\ell_1$-ball $B_*=B_1$,
whose geometry is entirely different than in the previous example.
Applying the Slepian-Fernique inequality in this setting shows that
$$
	\E\|X\|\lesssim \E\bigg[\sup_{t\in B_*}\langle t,g\rangle\bigg] =
	\E\|g\|_\infty \lesssim \sqrt{\log d},
$$
which captures precisely the correct behavior in this setting.
\end{example}

\begin{figure}
\begin{tikzpicture}[scale=.9]
\tikzset{samples=50}

\draw (1.5,2) node {$\scriptstyle B =
        \left(\begin{smallmatrix} 1\phantom{/} & 0\\ 0\phantom{/} & 
1\end{smallmatrix}\right)$};
\draw[thick,fill=blue!25, domain=0:2*pi] plot
      ({1.5+cos(\x r)*sqrt(cos(\x r)*cos(\x r)+0*sin(\x r)*sin(\x r))},
       {sin(\x r)*sqrt(sin(\x r)*sin(\x r)+0*cos(\x r)*cos(\x r))});
\draw[thick] (0,0) to (3,0);
\draw[thick] (1.5,-1.5) to (1.5,1.5);

\draw (5,2) node {$\scriptstyle B=
        \left(\begin{smallmatrix} 1 & 1/2\\ 1/2 & 
1\end{smallmatrix}\right)$};
\draw[thick,fill=blue!25, domain=0:2*pi] plot
      ({5+cos(\x r)*sqrt(cos(\x r)*cos(\x r)+sin(\x r)*sin(\x r)/2)},
       {sin(\x r)*sqrt(sin(\x r)*sin(\x r)+cos(\x r)*cos(\x r)/2)});
\draw[thick] (3.5,0) to (6.5,0);
\draw[thick] (5,-1.5) to (5,1.5);

\draw (8.5,2) node {$\scriptstyle B=
        \left(\begin{smallmatrix} 1\phantom{/} & 1\\ 1\phantom{/} & 
1\end{smallmatrix}\right)$};
\draw[thick,fill=blue!25, domain=0:2*pi] plot
      ({8.5+cos(\x r)*sqrt(cos(\x r)*cos(\x r)+sin(\x r)*sin(\x r))},
       {sin(\x r)*sqrt(sin(\x r)*sin(\x r)+cos(\x r)*cos(\x r))});
\draw[thick] (7,0) to (10,0);
\draw[thick] (8.5,-1.5) to (8.5,1.5);

\draw (12,2) node {$\scriptstyle B=
        \left(\begin{smallmatrix} 1/8 & 1\\ 1 & 
1/8\end{smallmatrix}\right)$};
\draw[thick,fill=blue!25, domain=0:2*pi] plot
      ({12+cos(\x r)*sqrt(cos(\x r)*cos(\x r)/8+sin(\x r)*sin(\x r))},
       {sin(\x r)*sqrt(sin(\x r)*sin(\x r)/8+cos(\x r)*cos(\x r))});
\draw[thick] (10.5,0) to (13.5,0);
\draw[thick] (12,-1.5) to (12,1.5);

\end{tikzpicture}
\caption{Various possible shapes of the deformed ball $B_*=\{x(v):v\in 
B_2\}$ are illustrated in the two-dimensional case $d=2$.
Note that the matrix $B$ is positive semidefinite in the first three 
examples but not in the fourth example.
\label{fig:interp}}
\end{figure}
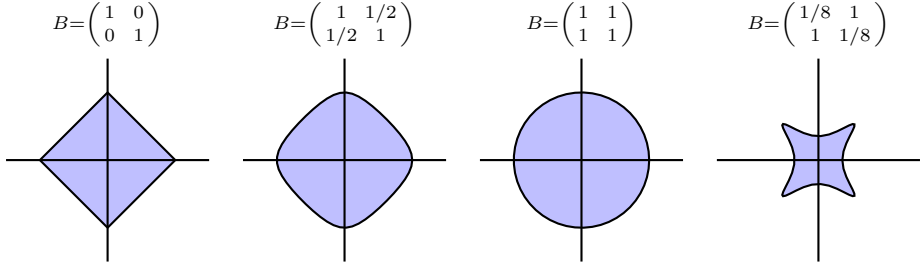
In general, the deformed ball $B_*$ can take very different shapes, as 
is illustrated in Figure \ref{fig:interp}.  The beauty of this 
construction is that the manner in which the geometry of the 
space $(B_2,d)$ is captured by the geometry of $(B_*,\|\cdot\|)$
provides a clear mechanism that gives rise to the phenomenon predicted by 
Conjecture~\ref{conj:1}.

Unfortunately, the simple geometry exhibited above is much 
less clear when the matrix $B$ is not positive semidefinite.
One might hope that the inequality 
$$
	d(v,w) \,\stackrel{?}{\lesssim}\,
	\|x(v)-x(w)\|
$$
remains valid in the general setting, but this is not always true.
The following illuminating example was suggested by Afonso Bandeira.

\begin{example}
\label{ex:afonso}
Let $a,b,\delta>0$ ($a\ne b$) and let
$$
	B = \begin{pmatrix}
	\delta & 1 \\ 1 & 0
	\end{pmatrix},\qquad
	v = \begin{pmatrix} a\\ b\end{pmatrix},\qquad
	w = \begin{pmatrix} b\\ a\end{pmatrix}.
$$
We readily compute
$$
	d(v,w)^2 = \delta(a^2-b^2)^2,
$$
while
$$
	\|x(v)-x(w)\|^2 =
	(a\sqrt{\delta a^2+b^2}-b\sqrt{\delta b^2+a^2})^2
	\sim
	\delta^2(a^4-b^4)^2/4a^2b^2
	\mbox{ as }\delta\downarrow 0.
$$
Thus the ratio
$$
	\frac{d(v,w)}{\|x(v)-x(w)\|} \sim
	\frac{1}{\sqrt{\delta}}
	\frac{2ab}{a^2+b^2}\quad\mbox{as }\delta\downarrow 0
$$
can be arbitrarily large.  This example is essentially the worst possible,
as optimizing $\gamma$ in Lemma \ref{lem:basic} shows that
$d(v,w)^2\lesssim \max_{ij}b_{ij}\,\|x(v)-x(w)\|$ for $v,w\in B_2$.
\end{example}

\begin{rem}
\label{rem:singular}
While the above example illustrates conclusively that the desired 
inequality cannot hold in general when $B$ is not positive semidefinite, 
we also note that the failure point in this example appears to be very 
special.  The vectors $v$ and $w$, while far apart in the Euclidean 
distance, satisfy both $d(v,w)=0$ and $\|x(v)-x(w)\|=0$ when $\delta=0$.  
These points are therefore in some sense ``singular'' with respect to 
the geometry of $(B_2,d)$ and of $(B_*,\|\cdot\|)$ when $\delta=0$. 
Example~\ref{ex:afonso} shows that the comparison between the two 
geometries can fail near such singular points.  Numerical experiments 
suggest that such points are rather rare 
and that the inequality $d(v,w)\le 2\|x(v)-x(w)\|$ typically fails only 
in a very small subset of the unit ball.  We do not have a precise 
formulation of this idea, however.
\end{rem}

The phenomenon that is illustrated by Example~\ref{ex:afonso} is 
controlled in Lemma~\ref{lem:basic} by the addition of a second term 
that dominates the bound at the singular points of the geometry of $(B_2,d)$. The 
remarkable aspect of this second term is that it has a very suggestive 
interpretation: if the matrix $-B$ were positive semidefinite (which of 
course cannot be the case as $B$ has nonnegative entries), this would be 
the natural distance corresponding to Gaussian process defined by the 
convex hull of random variables $U_1,\ldots,U_d$ with $U\sim N(0,-B)$.  
By Lemma~\ref{lem:supg1}, the supremum of this Gaussian process would be 
of the same order as the second term of the last expression in Theorem 
\ref{thm:main1}, which would suffice to establish Conjecture~\ref{conj:1}.

While this intuition clearly cannot be implemented in this manner, it is 
nonetheless highly suggestive that the validity of 
Conjecture~\ref{conj:1} can ``almost'' be read off from the geometric 
structure described by Lemma~\ref{lem:basic}.  Unfortunately, we do not 
know how to optimally exploit this geometric structure.  In 
Theorem~\ref{thm:main3ext}, we have crudely forced $-B=B^--B^+$ to be 
positive definite by estimating it from above by $B^-$.  The problem 
with this approach is that the entries of $B^-$ can be much larger than 
the entries of $B$, which is the origin of the suboptimal second term in 
Corollary~\ref{cor:main3sharp}: there can in general be significant 
cancellation between $B^-$ and $B^+$ that our approach fails to exploit. 
The elimination of this inefficiency in the proof of 
Theorem~\ref{thm:main3ext} would be a significant step towards 
the resolution of Conjecture \ref{conj:1}.

We conclude by noting that there is no reason, in principle, to expect 
that a sharp bound on the expected supremum of a Gaussian process can 
always be obtained using the Slepian-Fernique inequality, as we have 
done in the proof of Theorem~\ref{thm:main1ext}.  In general, the 
connection between the supremum of a Gaussian process and the underlying 
geometry is described by the generic chaining method \cite{Tal14}.  
Unfortunately, even a geometric description along these lines
 of the trivial behavior of the supremum of a Gaussian process over a 
convex hull remains a long-standing open problem \cite[pp.\ 
50--51]{Tal14}, so that a direct application of generic chaining methods 
in the present setting appears to present formidable difficulties.

\subsection*{Acknowledgments}

Many of the results presented here were obtained while the author was a 
visitor at IMA in the spring semester of 2015 in the context of the 
annual program ``Discrete Structures: Analysis and Applications,'' and 
while the author attended the Oberwolfach workshop ``Probabilistic 
Techniques in Modern Statistics'' in May 2015. It is a pleasure to thank 
IMA and Oberwolfach, and in particular the organizers of these excellent 
programs, for their hospitality.  The author is grateful to Rafa\l\ 
Lata{\l}a, Afonso Bandeira, and Amirali Ahmadi for several interesting 
discussions, and to Afonso Bandeira for suggesting Example 
\ref{ex:afonso}.

An early draft of this paper was dedicated with great admiration to 
Evarist Gin\'e on the occasion of his 70th birthday.  I was immensely 
saddened to learn that Evarist unexpectedly passed away shortly after 
the completion of this draft, as is begrudgingly reflected in the 
dedication of the present version of this paper.


\end{document}